\theoremstyle{plain}
\newtheorem{theorem}{Theorem}
\newtheorem*{nonum-theorem}{Theorem}
\newtheorem{proposition}{Proposition}
\newtheorem{lemma}{Lemma}[section]
\newtheorem{lemma-remark}{Lemma-remark}[section]
\newtheorem{corollary}{Corollary}
\newtheorem{nonum-corollary}{Corollary}
\newtheoremstyle{handleNumber}{}{}{\itshape}{}{}{}{\newline}{{\bf #1} \thmnote{#3}}
\theoremstyle{handleNumber}
\newtheorem*{handnum-theorem}{Theorem}
\theoremstyle{definition}
\newtheorem{definition}{Definition}
\newtheorem{denotation}{Denotation}
\newtheorem*{nonum-definition}{Definition}
\newtheorem{construction}{Construction}
\theoremstyle{remark}
\newtheorem{remark}{Remark}
\newcommand{\aff}{{\mathbb{A}}}
\newcommand{\affl}{\mathbb{A}^1}
\newcommand{\prl}{\mathbb{P}^1}
\newcommand{\pro}{{\mathbb{P}}}
\newcommand{\bO}{\mathscr{O}}
\newcommand{\bF}{\mathscr{F}}
\newcommand{\bL}{\mathscr{L}}
\newcommand{\bC}{\mathcal{C}}
\newcommand{\bCp}{{\mathcal{C}^\prime}}
\newcommand{\ovC}{{\overline C}}
\newcommand{\ovbC}{{\overline{\mathcal{C}}}}
\newcommand{\bCinf}{{\mathcal{C}_{inf}}}
\newcommand{\sbC}{\mathcal{C}^\prime}
\newcommand{\sbCinf}{\mathcal C_{inf}^\prime}
\newcommand{\ssbC}{\mathcal{C}^{\prime\prime}}
\newcommand{\ssbCinf}{\mathcal C_{inf}^{\prime\prime}}
\newcommand{\Cinf}{C_{inf}}
\begin{document}

\title{Rigidity theorem for presheaves with Witt-transfers}
\author{Andrei Druzhinin
\thanks{Research is supported by "Native towns", a social investment program of PJSC "Gazprom Neft" }
}
\date{April 12, 2017}
\newcommand{\Address}{{
  \bigskip
  \footnotesize
  \textit{address:}
  \textsc{Chebyshev Laboratory, St. Petersburg State University, 14th Line V.O., 29B, Saint Petersburg 199178 Russia}
  \par\nopagebreak
  \textit{e-mail address:}
   \texttt{andrei.druzh@gmail.com}
}}

\maketitle

\abstract{
The rigidity theorem for homotopy invariant presheaves with Witt-transfers on the category of smooth affine varieties over a field 
with characteristic not equal to $2$ is proved. Namely the isomorphism $\bF(U)\simeq \bF(x)$ where $U$ is henseliation of a variety at smooth closed point with separable residue field is proved. The rigidity for presheaves $W^i(-\times X)$ where $X$ is smooth variety and $W^i(-)$ are derived Witt-groups follows as corollary.}

\section{Introduction}

Let’s give brief review of rigidity theorems. 

In the Suslin's remarkable article \cite{SusAlgClFiledKth} (in that he proved Quillen's conjecture for K-thory) 
he proved that $K(F_0,\mathbb Z/n)\simeq K(F,\mathbb Z/n)$ for extensions of algebraically closed fields $F/F_0$ and $n$ prime to its characteristic. 
Then Gabber in \cite{Gab_RigKthhenspairs} and Gillet and Thomason in \cite{GT_HensLocRingsKth} extended this result to henselian pairs and strictly henselisation at smooth point. Namely the last one is isomorphism 
$K(k,\mathbb Z/n)\simeq K(R,\mathbb Z/n)$ where 
  $R$ is Henselisation at smooth point of variety of finite type over separably closed field $k$,
and in \cite{Sus_LocFieldsKth} Suslin proved this for $R$ being any henselian ring with valuation of height one and $k$ being its residue field. 

One can see that this proves are based on idea of using of transfers for K-theory.
In \cite{SV_SingHomofSchemes} in that Suslin and Voevodsky give definition of singular cohomolgy of algebraic variety over an arbitrary filed $k$ and prove that for variety over $\mathbb{C}$ this groups coincidences with topological singular cohomology of $X(\mathbb{C})$, they proved rigidity theorem 
for henselian local ring at smooth rational point of an algebraic variety
for any homotopy invariant presheave with transfers defined by Cor-correspondences, where $Cor(X,Y)$ is defined as free abelian group of integral cycles in $X\times Y$ finite surjective over $X$. 

Further in \cite{PY_RigidityOrientFunctor} 
(Panin and Yagunov)
in \cite{YH_RidgsomeRepesTh} 
(Yagunov Hornbostel)
and in \cite{Y_NonOrientableTheor} (Yagunov)
proved rigidity theorems for functors with another (and weaker) transfers and get regidiry for orientable cohomology theories (in particular etale cohomology, motivic cohomology, algebraic cobardism), rigidity for some class of theories representable in $\mathbb{A}^1$-homotopy category and ono-orientable cohomology theories. 
In \cite{RO_RidgSHfieldex} proved rigidity at the level of motivic stable homotopy categories.
In \cite{N_RigdOmegatr} proved rigidity for presheaves with $\Omega$-transfers. 
In \cite{B_MotRealEtHth} Bachmann proved rigidity for theories representable in stable homotopy category with inverting $\rho$ that is element in $Hom_{SH(R)}(\mathbb{1}, \mathbb{G}_m)$ additive inverse to element defined by $-1\in R$

We prove rigidity theorem for homotopy invariant presheaves with so-called Witt-transfers involved in \cite{AD_WittCor}. In distinct to the most of the above results (except last one) we don't assume that presheaf is torsion. As a corollary rigidity for presheaves $W^i(X\times -)$ where $W^i(-)$ are derived Witt-groups follows form this because these presheaves are presheaves with Witt-transfers (proposition \ref{prop_WittgrWitttr}) and they are homotopy invariant as shown in \cite{Bal_HomInvWitt}.
\begin{nonum-theorem}
Let $X$ is algebraic variety over a field $k$ with $char\,k\neq 2$ and $x$ is smooth closed point with separable residue field $k(x)/k$.
Then for any homotopy invariant presheave $\bF$ with Witt-transfers
$\bF(Spec\,\mathcal{O}^h_{X,x})\simeq \bF(x)$
\end{nonum-theorem}
As a corollary rigidity for presheaves $W^i(Y\times -)$ 
 follows form this because these presheaves are presheaves with Witt-transfers and they are homotopy invariant as shown in \cite{Bal_HomInvWitt}.
\begin{nonum-corollary}
For a smooth variety $X$ over field $k$ , $char k\neq 2$ and for a closed point $x$ on $X$ with separable residue field $k(x)/k$
and any smooth 
$Y$
$$W^i(Y\times \mathcal Spec\,O^h_{X,x})\simeq W^i(Y\times Spec\,k(x))$$
where $W^i$ $(i\in \mathbb Z/4\mathbb Z)$ derived(higher) Witt groups.
\end{nonum-corollary}
Similarly to the works noted above we conclude the rigidity theorem from the statement about sections of relative curve over $U=Spec\,\mathcal{ O}^h_{X,x}$ that coincide at closed fibre.
\begin{nonum-theorem}
Let
  $U$ be henselisation of a variety $X$ at smooth closed point with separable reside field over a base field $k$, $char\, k \neq 2$.
Let 
  $\ovbC$ be  any relative projective variety over $U$ with fibres of dimension one 
    (i.e. smooth projective morphism $\ovbC\to U$ of relative dimension 1).
Let $\bCinf\subset \ovbC$ be closed subsceme that is finite over $U$ and such that open complement $\ovbC-\bCinf$ is smooth over $U$. 
Then for any two sections 
  $S_0,S_1\colon U\to \ovbC$ 
    coinciding in closed fibre, i.e. such that $S_0(u) = S_1(u)=x\in \ovC$ 
    (where $u\in U$ denotes closed point and $C\subset \bC$ denotes closed fibre of $\bC$)
$$\xymatrix{
\bC & C\ar@{^(->}[l]  &\\
U\ar@<-1ex>[u]_{S_1}\ar@<1ex>[u]^{S_0} & u\ar@{^(->}[l]\ar[u]^x
&,}$$
homotopy classes of sections $S_0$ and $S_1$ in the group $\overline{WCor}(U,\bC)$
are the same, i.e.
$$S_0,S_1\colon U\to \bC, S_0(u)=S_1(u)\implies [S_0] = [S_1] \in \overline{WCor}(U,\bC).$$
\end{nonum-theorem}

The text is organized into three sections. 
In the first one we recall definition  of Witt-correspondences and present a construction of Witt-correspondences (that produce it from rational function on relative curve and trivialisation of the fraction of canonical classes of curve and base variety) and show that derived Witt-groups are presheaves with Witt-transfers. 
In the second section we prove 
mentioned rigidity theorem.
In Appendix we recall the prove of rigidity for Picard group based on proper base change according the reason from the proof of theorem from \cite{SV_SingHomofSchemes}. 

Author thanks to I.~Panin who encouraged him to deal with this question,
and to A.~Ananievskiy for useful consultations and remarks.

\section{Witt-correspondences}
In this section we give definition of the category $WCor_k$ (involved in \cite{AD_WittCor} ) that is category of Witt-correspondences and proof some lemmas about Witt-correspondences used in the next section. 
In purposes of the article it is enough to define Witt-correspondences between smooth affine varieties.
In both following sections we assume that characteristic of base filed $k$ isn't equal to $2$ 
($char\,k\neq 2$). 

To give the definition of Witt-correspondences we define following exact category with duality.

\begin{definition}
Let $X$ be smooth affine variety and $s\colon S\to X$ be affine scheme over $X$.
Let denote by
$\mathcal P(S_X)$ (or by $\mathcal P(s)$) 
is full subcategory of the category $k[S]$-modules 
  consisting of modules $M$ that are finitely generated projective over $k[X]$.
The internal $\mathcal Hom$-functor represented by $k[X]$ 
defines duality functor on $\mathcal P(S_X)$, i.e.
$$M\mapsto D_X(M)= Hom_{k[X]}(M, k[X] )$$
where right side is considered as $k[S]$-module according to the action of $k[S]$ on $M$
(it is duality since it 
commutes with restriction of scalars along $k[X]\to k[S]$ and 
it defines duality on the category of finitely generated projective modules over $k[X]$)

So we get exact category with duality $(\mathcal P(S_X),D_X)$.
In particular, 
for affine smooth varieties $X$ and $Y$
we denote by $(\mathcal P(X,Y), D_X)$ (or $(\mathcal P_X^Y, D_X)$) 
the category $(\mathcal P(X\times Y\to X), D_X)$
. 
\end{definition}

\begin{proposition}\label{pro_compQSCor}
There is a functor between categories with duality
$$-\circ -\colon (\mathcal P^Y_X,D_X) \times (\mathcal P^Z_Y,D_Y) \to (\mathcal P^Z_X,D_X)$$
defined for all affine smooth varieties $X$, $Y$ and $Z$ and it is natural along them
(product of categories with duality 
  $(\mathcal P^Y_X,D_X) \times (\mathcal P^Z_Y,D_Y)$ is considered as  
  category with duality $D_X\times D_Y$
).
\end{proposition}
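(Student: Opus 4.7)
The plan is to define the composition on objects by $N\circ M := M\otimes_{k[Y]} N$, where $M$ is viewed as a right $k[Y]$-module via the inclusion $k[Y]\hookrightarrow k[X\times Y]$ and $N$ as a left $k[Y]$-module similarly; the residual $k[X]$-action on $M$ and $k[Z]$-action on $N$ equip the tensor product with a $k[X]\otimes_k k[Z] = k[X\times Z]$-module structure, and on morphisms the composition is the tensor of $k[Y]$-linear maps. First I would verify that $N\circ M$ lies in $\mathcal P^Z_X$: since $k[Y]$ is commutative and $N$ is finitely generated projective over $k[Y]$, it is a direct summand of some $k[Y]^n$, so $M\otimes_{k[Y]}N$ is a direct summand of $M^n$, which is finitely generated projective over $k[X]$ by the hypothesis on $M$. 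Associativity and unitality of $-\circ-$ are inherited from those of the tensor product, establishing the underlying functor of exact categories.

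The heart of the proposition is compatibility with duality, i.e., the construction of a natural isomorphism $\eta_{M,N}\colon D_X(N\circ M)\xrightarrow{\simeq} D_Y(N)\circ D_X(M)$. I would build $\eta_{M,N}$ as the composite
$$Hom_{k[X]}(M\otimes_{k[Y]} N,\, k[X]) \;\simeq\; Hom_{k[Y]}(N,\, Hom_{k[X]}(M, k[X])) \;\simeq\; D_X(M)\otimes_{k[Y]} D_Y(N),$$
where the first isomorphism is the tensor--hom adjunction and the second is the canonical evaluation map $D_Y(N)\otimes_{k[Y]} P\to Hom_{k[Y]}(N, P)$ applied to $P = D_X(M)$; this is an iso precisely because $N$ is finitely generated projective over $k[Y]$. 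Then I would verify $k[X\times Z]$-linearity and bifunctoriality of $\eta_{M,N}$, and the coherence axiom that $\eta$ intertwines the canonical double-dual identifications on both sides, which is what makes the pair $(-\circ-,\eta)$ a functor of categories with duality.

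Naturality along $X$, $Y$, $Z$ reduces to base change of tensor products: for a morphism $f\colon X'\to X$ the assignment $M\mapsto k[X']\otimes_{k[X]} M$ commutes with $-\otimes_{k[Y]}-$, and similarly in the other two variables; compatibility with the induced dualities follows from the same tensor--hom formalism as above. The main obstacle I foresee is the bookkeeping surrounding $\eta_{M,N}$: checking its $k[X\times Z]$-linearity, its compatibility with the involutive structure on both sides (so that it behaves correctly under $D\circ D\simeq \mathrm{id}$), and that all sign and twist conventions agree with those built into the definition of $(\mathcal P^Y_X, D_X)$. Once this coherence is pinned down, the remaining verifications are routine applications of tensor--hom formalism exploiting projectivity of $N$ over $k[Y]$.
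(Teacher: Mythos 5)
Your proposal matches the paper's proof essentially step for step: composition is defined as $M\otimes_{k[Y]}N$, membership in $\mathcal P^Z_X$ is checked by reducing to free modules via direct summands, and the duality isomorphism is the tensor--hom adjunction followed by the canonical map that is invertible because $N$ is finitely generated projective over $k[Y]$. The extra coherence checks you flag (linearity over $k[X\times Z]$, compatibility with double duals) are left implicit in the paper but are indeed the routine verifications you describe.
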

\begin{proof}
To define functor of categories with duality we should define functor 
  $$\begin{aligned} 
  \mathcal P^Z_Y& \times& \mathcal P^Y_X &\to \mathcal P^Z_X \\
  (N&,& M)& \to N\circ M\end{aligned}$$ and 
natural isomorphism between its compositions with duality functors at the left and right
  $\nu\colon D_X(N\circ M) \simeq (D_Y(N) \circ D_X(M) )$.

We define required functor as tensor product over $k[Y]$, i.e.
consider functor 
$$\begin{aligned}
k[Y\times Z]-Mod&\times& k[X\times Y]-Mod &\to k[X\times Z]-Mod \\
(N&,& M) &\mapsto  M\otimes_{k[Y]} N
\end{aligned}$$
and show that it defines functor from $\mathcal P^Y_X\times\mathcal P^Z_Y$ to $\mathcal P^Z_X$.
By definition these categories consists of modules $M$ over $k[X\times Y]$ ($N$ over $k[Y\times Z]$ and $K$ over $k[X\times Z]$) 
that are finitely generated projective over $k[X]$ ($k[Y]$ and $k[X]$ respectively).
So we should show that    
  if $M$ is finitely generated projective over $k[X]$ and $N$ -- over $k[Y]$ 
  then $M\otimes_{k[Y]} N$ is finitely generated projective over $k[X]$ 
It follows from that 
tensor product  preserves direct sums. In fact,  
if $N$ is free module of rank $n$ over $k[Y]$ then $C(M\times N)\simeq M^n$, so
if $M_{k[X]}$ is projective and $N_{k[Y]}$ is free then tensor product $M\otimes N$ is projective over $k[X]$,
and so by additivity again the same holds if $N_{k[Y]}$ is projective.

Then we define natural transformation $\nu$ as composition of the following isomorphisms
\begin{multline}
D_X(M\otimes_{k[Y]} N)=Hom_{k[X]}(M\otimes_{k[Y]} N, k[X])\simeq\\
\simeq
Hom_{k[Y]}(N, Hom_{k[X]}(M, k[X]) )\simeq
 Hom_{k[Y]}(N,k[Y])\otimes Hom_{k[X]}(M, k[X])=\\=D_X(M)\otimes_{k[Y]} D_Y(N) \label{DualTensProd}
 \end{multline}
Here the second isomorphism is adjunction isomorphism of tensor product and internal hom-functors,
and the third one follows from that $N$ is projective over $k[Y]$

\end{proof}

\begin{definition}
Category $WCor_k$ is additive category  
  that objects are smooth affine varieties over $k$, 
  morphisms groups  are defined as $WCor(X,Y) = W(\mathcal P^Y_X,D_X)$
    where $W$ is Witt-group of exact category with duality (according to Balmer definition \cite{Bal_DerWittgr}),
  and compositions is induced by functor $-\circ-$ from proposition \ref{pro_compQSCor},
  identity morphism $Id_X$ is defined by class of quadratic space $(k[\Delta],1)$ 
    where $\Delta$ is diagonal in $X\times X$ and free of rank one 
     $k[X]$-module is equipped with unit quadratic form.
     
 Presheave with Witt-transfers is an additive presheave $F\colon WCor_k\to Ab$.     
\end{definition}

\begin{proposition}\label{prop_WittgrWitttr}
The derived Witt-groups are presheaves with Witt-transfers,
or more strictly it means that
for any $i\in \mathbb{Z}/4\mathbb{Z}$
there is a functor $\bF\colon WCor\to Ab$ 
such that 
$\bF(X)=W^i(X)$ for any smooth variety $X$
and $\bF(f)=W^i(f)$ for any regular morphism.
The presheaves $W^i(X\times -)$ for smooth affine $X$ are presheaves with Witt-transfers in the same sense. 
\end{proposition}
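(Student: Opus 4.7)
The plan is to construct, for each Witt-correspondence $\alpha = [(M,\phi)] \in WCor(X,Y)$, a homomorphism $\alpha^* \colon W^i(Y) \to W^i(X)$, and to verify that these assemble into a (contravariant) additive functor on $WCor_k$ extending the usual functoriality of $W^i$. Given a representative $(M,\phi)$ I would first define an additive functor $\Phi_M \colon \mathcal{P}(Y) \to \mathcal{P}(X)$ on objects by $E \mapsto M \otimes_{k[Y]} E$. By the same argument used in the proof of Proposition \ref{pro_compQSCor}, $M \otimes_{k[Y]} E$ lies in $\mathcal{P}(X)$ whenever $E \in \mathcal{P}(Y)$; and since every admissible short exact sequence in $\mathcal{P}(Y)$ splits, $\Phi_M$ is automatically exact. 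Specialising the natural isomorphism $\nu$ of Proposition \ref{pro_compQSCor} to this setting yields
$$D_X(M \otimes_{k[Y]} E) \simeq D_X(M) \otimes_{k[Y]} D_Y(E),$$
and combining it with $\phi$ equips $\Phi_M$ with the structure of a duality-preserving form functor $(\mathcal{P}(Y),D_Y) \to (\mathcal{P}(X),D_X)$, inducing the desired map $\alpha^* \colon W^0(Y) \to W^0(X)$.

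For general $i \in \mathbb{Z}/4\mathbb{Z}$ I would extend $\Phi_M$ termwise to the bounded derived category of vector bundles. Exactness of $\Phi_M$ on $\mathcal{P}(Y)$ guarantees that this termwise extension preserves quasi-isomorphisms, so it descends to a triangulated functor $D^b(\mathcal{P}(Y)) \to D^b(\mathcal{P}(X))$ without any need to derive; the same $\nu$ makes it compatible with the shifted dualities used in Balmer's definition of derived Witt-groups, so his general formalism produces $\alpha^* \colon W^i(Y) \to W^i(X)$. Functoriality along composition in $WCor_k$ then follows from the naturality of $\nu$ in Proposition \ref{pro_compQSCor}; the identity $(k[\Delta],1)$ acts as the identity via $k[\Delta] \otimes_{k[X]} E \simeq E$; and for a regular morphism $f \colon X \to Y$ the Witt-correspondence $(k[\Gamma_f],1)$ yields the classical pullback $W^i(f)$ because $k[\Gamma_f] \otimes_{k[Y]} E \simeq f^* E$ canonically as $k[X]$-modules with their symmetric forms. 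The statement for the presheaves $W^i(Z \times -)$ is then deduced by precomposition with the evident "product with $Z$" endofunctor of $WCor_k$, sending $(M,\phi) \in \mathcal{P}^{Y_2}_{Y_1}$ to $(k[Z] \otimes_k M, \mathrm{id}_{k[Z]} \otimes \phi) \in \mathcal{P}^{Z \times Y_2}_{Z \times Y_1}$.

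The main obstacle will be checking that $\alpha^*$ depends only on the Witt-equivalence class of $(M,\phi)$, that is, that a metabolic Witt-correspondence induces the zero map on every $W^i$. Concretely, if $L \subset M$ is a Lagrangian sub-$k[X \times Y]$-module with $L, M/L \in \mathcal{P}^Y_X$ and $L = L^{\perp}$ under $\phi$, I would show that $L \otimes_{k[Y]} E$ is a Lagrangian in $(M \otimes_{k[Y]} E,\, \nu \circ (\phi \otimes \psi))$ for every $(E,\psi)$ representing a class of $W^i(Y)$: exactness of $-\otimes_{k[Y]} E$ on $\mathcal{P}(Y)$ keeps the Lagrangian short exact sequence exact, while compatibility of $\nu$ with inclusions of direct summands identifies $L \otimes_{k[Y]} E$ with its own orthogonal. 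Once this and its derived analogue for symmetric quasi-isomorphisms are in place, the remaining verifications reduce to bookkeeping with the natural transformation from Proposition \ref{pro_compQSCor}.
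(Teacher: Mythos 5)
Your proposal is correct and is essentially the paper's argument spelled out in detail: the paper simply notes that $W^i(U)=W^i(\mathcal P^{pt_k}_U,D_U)$ and invokes the composition functor of Proposition \ref{pro_compQSCor} with $Z=pt_k$, which is exactly your $E\mapsto M\otimes_{k[Y]}E$ together with the isomorphism $\nu$. The extra verifications you flag (descent to Witt classes, i.e.\ metabolic correspondences acting by zero, and compatibility with the shifted dualities) are exactly what is implicitly absorbed into the well-definedness of composition in $WCor_k$, so there is no divergence in method.
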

\begin{proof}
The claim follows from proposition \ref{pro_compQSCor}
since $W^i(U)= W^i(\mathcal P^{pt_k}_U, D_U)$ for any smooth affine $U$
and from that multiplication by smooth affine $X$ induces functor $X\times - \colon WCor_k\to WCor_k$
\end{proof}

\begin{remark}
Functor $(X,Y)\mapsto (\mathcal P^Y_X,D_X)$ with natural transformation between functors 
  $(X,Y,Z)\mapsto (\mathcal P^Y_X,D_X)\times (\mathcal P^Y_X,D_X)$ and $(X,Y,Z)\mapsto (\mathcal P^Z_X,D_X)$
  defined by the functor $C$ form proposition \ref{pro_compQSCor}
defines category $\mathcal{Q}Cor$ that objects are smooth varieties and that is 
  enriched category  over the category of categories with duality. 

Then the category $WCor$ is result of applying the functor fo Witt-groups to $\mathcal QCor$.
 And it is the category in zero degree of $\mathbb Z/4$-graded category that is result of applying whole Witt cohomology theory. 
\end{remark}

\begin{definition}
$$\overline{WCor}(X,Y) = coker ( WCor(X\times\affl,Y) \xrightarrow{r_0^*-r_1^*} WCor(X,Y) )$$
where $r_0,r_1\colon X\to X\times\affl$ denotes unit and zero sections.
\end{definition}
\begin{definition}
Homotopy invariant presheave with Witt-transfers $\bF$ on the category of affine smooth schemes is a presheave with Witt-transfers that is homotopy invariant, i.e. such that $\bF(X\times\affl)\simeq\bF(X)$ for any smooth affine $X$,
and this is the same to say that $\bF$ is presheaf on the category $\overline{WCor}$.
\end{definition}

\begin{denotation}\label{def_twaut}

For any variety $U$ and invertible function $q\in k[U]^*$ we denote by 
$\langle q\rangle\in WCor(U,U)$ an endomorphism of $U$ defined by
class of quadratic space $(k[\Delta],(q) )$
where
  $\Delta$ denotes diagonal in $U\times U$
  and $(q)$ is $1\times 1$ matrix defining
  quadratic $k[U\times pt]$-form on $k[U\times U]$-module $k[\Delta]$
  since it is free module of rank 1 over $k[U\times pt]$. 

Note that all these endomorphisms are automorphisms and that under this denotation $\langle 1\rangle \in WCor(U,U)$ is identity morphism of $U$.
\end{denotation} 

\begin{remark}\label{rm_twistmap}
Any regular map of smooth affine varieties $\varphi\colon U\to V$ can be considered as Witt-correspondence that is the class
of the space $(k[\Gamma], 1)$, where $\Gamma\subset X\times Y$ is graph of $\varphi$.
We will denote this morphism by $[\varphi]$.

So there is a functor $Sm_k\to WCor_k$.
Moreover $\Phi\circ \varphi= \varphi^*(\Phi)$ where $\varphi^*\colon WCor(Y,Z)\to WCor(X,Z)$ is homomorphism is induced by the scalar extension functor $\mathcal P(Y,Z)\to \mathcal P(X,Z)$. 

In addition since any quadratic form on the free module of rank one is defined by invertible function, any Witt-correspondence that is defined by quadratic space with module $k[\Gamma]$ is equal to $[\varphi]\circ\langle q\rangle$ for some $q\in k[U]^*$. 
\end{remark}

\begin{lemma-remark}\label{lm_unitclp-symb=unit}
For any local henselian $U$ with closed point $u$ and invertible function $q\in k[U]^*$ 
$$q(u)=r^2, r\in k(u)* \implies \langle q,U\rangle = id_U\in WCor(U,U).$$ 
\end{lemma-remark}
\begin{proof}
Since $U$ is henselian local,  $q(u)$ is square and $char\, k\neq 2$ then $q$ is square, i.e. $q=w^2$ for some invertible $w\in k[U]$.
Then homomorphism of multiplication by $w$ in $k[\Delta]$ ($\Delta\subset U\times U$ is diagonal) 
induces isomorphism $(k[\Delta],(q) )\stackrel{-\cdot w}{\simeq} (k[\Delta],(1) )$. Thus $\langle q\rangle = \langle 1\rangle\in WCor(U,U)$.  
\end{proof}

Now we give construction that produce Witt-correspondence $U\to Y$ from "curve-correspondence" $U\leftarrow \bC\to Y$ with "orientation" i.e. trivialisation of relative canonical class and a "good" function on $\bC$. 
\begin{definition}\label{rm_relcurve}
We call by smooth projective (affine) relative curve $\mathcal C$ over scheme $U$ a projective (affine) morphism $\mathcal C\to U$ of relative dimension 1.  
\end{definition}
\begin{denotation}
We denote by $Z(s)$ and $Z(f)$ vanish locus of section $s$ of some line bundle or vanish locus of the regular function $f$. 
\end{denotation}

\begin{definition}\label{def_OrCurF}
Let $U$ be a smooth affine variety over $k$, 
$\ovbC$ be a relative projective curve over $U$ (see definition  \ref{rm_relcurve}),
$\bC$ is open subscheme in $\ovbC$ such that $\bC$ is smooth over $k$ and affine,
and complement $\bCinf=\ovbC \setminus \bC$ is finite over $U$.
Let $\mu\colon \omega(\bC)\otimes \omega(U)^{-1}\simeq\bO(\bC)$ be some trivialisation, 
let $f\colon \ovbC\to\prl$ be a regular map
that isn't constant at each irreducible component of each fibre of $\ovbC$
and moreover such that 
$Z(f) = f^{-1}(0)$ is contained in $\bC$.
Finally let 
$Z$ be isolated component of $Z(f)$ (i.e. $Z(f)=Z\coprod Z^\prime$ for some $Z^\prime$)
and let $g\colon Z\to Y$ be a regular map for some smooth affine $Y$
$$\xymatrix{
Z\ar[r]\ar[dd]_g & Z\coprod Z^\prime \ar@{^(->}[d] \ar[rr]  && 0\ar@{^(->}[d]\\
& \bC\ar@{^(->}[r] \ar[dr]&\ovbC\ar[r]^f\ar[d]&\prl \\
Y& & U &  
}
$$

Then we denote by $OrCurF(U)$ the set of all such data $(U,\ovbC,\bC, \mu, f ,Z)$ (without $g$ and $Y$) for fixed smooth affine variety $U$,
and denote by $OrCurF(U,Y)$ the set of all such data $(U,\ovbC,\bC, \mu, f ,Z , g)$ for fixed smooth affine $U$ and $Y$
$$
OrCurF(U) \stackrel{def}{=} \{ (U,\ovbC,\bC, \mu, f ,Z) \},
OrCurF(U,Y) \stackrel{def}{=} \{ (U,\ovbC,\bC, \mu, f ,Z, g) \}
.
$$
\end{definition}

\begin{construction}\label{const_OrWLf-WCor}

For any $(U,\ovbC,\bC,\mu,f , Z)\in OrCurF(U)$ we construct a quadratic space $\Phi=(k[Z],q)$ in $\mathcal P(Z_U)$,
i.e. $k[Z]$-linear isomorphism $q\colon Hom_{k[U]}(k[Z],k[U])\simeq k[Z]$:
$$ (U,\ovbC,\bC,\mu,f , Z)\mapsto \Phi=(k[Z],q) $$.

To do this firstly we consider 
  the morphism of relative projective curves 
  $\Pi_{proj}=(f,pr_U)\colon \ovbC\to \prl_U$
  that is finite morphism of schemes since it is projective and quasi-finite (since $f$ is not constant on each fibre).
Then we consider its restriction to the morphism 
\begin{gather*}
\Pi_{triv}\colon \bCp\to V\colon  \\
\begin{aligned}
V &= \affl_U-\Pi_{proj}(\bCinf) \,(= \prl_U-(\infty_U\cup \Pi_{proj}(\bCinf))\, ) \\
\bCp &= \Pi_{proj}^{-1}(V) = \ovbC- ( \Pi_{proj}^{-1}( \Pi_{proj}(\bCinf)) \cup f^{-1}(\infty) ) .
\end{aligned}\end{gather*}
Note that since $Z(f)=f^{-1}(0)\subset \ovbC-\bCinf$ then $0_U\in V$ and $Z(f) \subset \bCp$.
The morphism $\Pi_{triv}$ is finite morphism of smooth affine schemes 
(Note: 
affinness of $\bC$ is not necessary to be required in the starting assumptions in fact 
since $\ovbC-f^{-1}(\infty)$ is finite over affine scheme $\affl_U$). 
So $\Pi_{triv}$ is flat (see \cite{AK_GrothDual}) 
and by proposition 2.1 form \cite{OP_Witt} there is an isomorphism 
$${q^\omega_{dif}}^\prime\colon 
   Hom_{k[\affl_U]}(k[\bC^\prime ],k[V] )\simeq 
   \omega(\bC^\prime )\otimes \Pi_{triv}^*(\omega(V))^{-1}$$
Multiplying the last isomorphism by 
  trivialisation $\mu$ (i.e. its restriction on $\bCp$)
  and trivialisation $dT^{-1}$
  of canonical class $V$ over $U$
we get isomorphism
\begin{equation*}  q_{dif} = q_{def}^\omega\otimes\mu\otimes dT^{-1}\colon
   Hom_{k[V]}( k[\bCp ] , k[V ])\simeq 
   k[ \bCp ]     
\end{equation*}

Now by base change along the embedding 
$U\times 0\hookrightarrow V$ we get $k[Z]$-linear isomorphism 
$$q_0\colon Hom_{k[U]}(k[Z(f)],k[U])\simeq k[U] $$
since 
the fibre of $\Pi_{triv}$ over $U\times 0$ is $Z(f)$.
Finally since $Z(f)= Z\coprod Z^\prime$ and since $q$ is $k[Z(f)]$-linear 
there is splitting of quadratic space $(k[Z(f)],q)=(k[Z],q)\oplus (k[Z^\prime],q^\prime)$. 
\end{construction}

\begin{remark}
In the case of $\ovbC=\prl_U$, $\bC=\affl_U$, $f$ being polymon in $k[U][t]=k[\affl_U]$ and trivialisation $\mu$ defined by differential of coordinate on affine space
the quadratic form $q_{dif}$ in previous construction is equal to the quadratic form defined with Euler trace, i.e for $a,b\in k[\affl_U]=k[U][t]$ $\langle a,b\rangle=tr(ab/f^\prime)$ where $f^\prime$ is derivative of $f$.
\end{remark}

\begin{definition}
For any smooth affine $U$, $Y$ and any $(U,\ovbC,\bC, \mu, f ,Z , g)\in OrCurF(U,Y)$ let's denote by 
$\langle \ovbC,\bC, \mu, f ,Z , g \rangle$
the element in $WCor(U,Y)$ defined by class of quadratic space constructed by \ref{const_OrWLf-WCor}.
Thus we get map
\begin{equation*}\begin{aligned}
OrCur(U,Y)\to &WCor(U,Y)\\
( \ovbC,\bC, \mu, f ,Z , g) \mapsto & [\Phi]=\langle \ovbC,\bC, \mu, f ,Z , g \rangle
\end{aligned}\end{equation*}

\end{definition}

\begin{lemma}\label{lm_SqMet}
1) 
Let $U$ and $Y$ are smooth varieties and dimension of all components of $Y$ is bigger then $0$. 
Let $(P,q)$ be quadratic space in $\mathcal P^Y_X$ 
(let's recall that it means that $q$ is $k[Y\times U]$-linear isomorphism $P\simeq Hom_{k[U]}(P,k[U])$)
 such that 
$P\simeq k[W]/(e^n)$ (as $k[Y\times U])$-module) for some open $W\subset Y\times U$ and regular function $e\in k[W]$.
Then
$$\begin{cases}
(P,q) \text{ is metabolic } \text{ , if } n \text{ is even} \\
(P,q)\simeq (P^\prime,q^\prime) \text{ for } P^\prime\simeq k[W]/(e) \text{ (as }k[Y\times U]-{module ) } \text{ , if } n \text{ is odd}
\end{cases}$$

2)
For any $f=t^n\in k[t]=k[\affl]$ for odd $n$ an endomorphism of $pt_k$ in $WCor_k$ defined by $f$, i.e. $\langle pt_k,\affl_k ,dt, f, Z(f), \affl_k\to pt_k \rangle\in WCor(pt_k,pt_k)$ is automorphism.
\end{lemma}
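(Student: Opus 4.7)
The plan is to prove part~(1) by exhibiting a sublagrangian $L=e^d P$ and invoking sublagrangian reduction for Witt groups of exact categories with duality (Balmer), and then to deduce part~(2) by applying construction~\ref{const_OrWLf-WCor} and the odd case of part~(1) to identify the resulting Witt class as a twist $\langle q\rangle$.

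For part~(1), I would first record that the pairing $\beta\colon P\times P\to k[U]$ associated to $q$ is balanced over $k[W]$, not merely over $k[Y\times U]$: the $k[W]$-module structure on $P$ is obtained by localization from the $k[Y\times U]$-structure, and $k[Y\times U]$-linearity of $q$ automatically extends to $k[W]$-linearity. Setting $d=\lceil n/2\rceil$ and $L=e^d P$, balancedness gives $\beta(e^d x,e^d y)=\beta(x,e^{2d}y)$; since $2d\geq n$ and $e^n P=0$, the submodule $L$ is isotropic. Nondegeneracy of $\beta$ then identifies $L^\perp$ with the $e^d$-annihilator of $P$, which under the hypothesis that $e$ is a nonzerodivisor on $k[W]$ (something the positivity of $\dim Y$ is there to guarantee, by forcing $W$ to be a smooth open on which $e$ cuts out a proper divisor) equals $e^{n-d}P$.

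For $n$ even, $d=n-d$ gives $L=L^\perp$, so $(P,q)$ is metabolic by definition. For $n=2m+1$ odd, $L=e^{m+1}P$ and $L^\perp=e^m P$, and multiplication by $e^m$ yields a $k[Y\times U]$-module isomorphism $k[W]/(e)\xrightarrow{\sim}L^\perp/L$ onto which the form $q$ descends to a nondegenerate $\bar q$. Balmer's sublagrangian reduction then gives $[(P,q)]=[(L^\perp/L,\bar q)]$ in the Witt group, which is the odd case of the statement with $P'\simeq k[W]/(e)$ (interpreting $\simeq$ as Witt-equivalence, which is what is needed downstream). For part~(2), I would apply construction~\ref{const_OrWLf-WCor} to $(pt_k,\prl_k,\affl_k,dt,t^n,Z(t^n))$, producing a quadratic space on $k[t]/(t^n)$. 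Reading the output inside $\mathcal P^{\affl_k}_{pt_k}$ (so that the dimension hypothesis of part~(1) is met with $Y=\affl_k$, $W=\affl_k$, $e=t$), the odd case of part~(1) identifies the Witt class with that of a nondegenerate form on $k[t]/(t)\simeq k$, necessarily $\langle q\rangle$ for some $q\in k^*$. Its image in $WCor(pt_k,pt_k)$ under $\affl_k\to pt_k$ is again $\langle q\rangle$, which is an automorphism with inverse $\langle q^{-1}\rangle$ by denotation~\ref{def_twaut}.

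The main obstacle I anticipate is the admissibility check for sublagrangian reduction: one has to confirm that $L=e^d P$, $L^\perp$, and $L^\perp/L$ are genuinely admissible subobjects and quotients of $P$ in the exact category $(\mathcal P^Y_U,D_U)$, i.e.\ that each is finitely generated projective over $k[U]$. The positivity of $\dim Y$ together with the nonzerodivisor property of $e$ on the smooth $W$ should combine to give the required flatness, but carefully carrying this out is where the technical weight of the argument lies.
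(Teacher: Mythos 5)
Your proposal is correct and follows essentially the same route as the paper: the identification of $(e^dP)^\perp$ with the $e^d$-annihilator $e^{n-d}P$ via $k[W]$-balancedness, the nonzerodivisor property of $e$ secured by $\dim Y>0$, the Lagrangian $e^{n/2}P$ in the even case, sublagrangian reduction to $k[W]/(e)$ in the odd case, and the deduction of part~(2) from the odd case are all exactly the paper's argument. The only difference is that you flag the admissibility checks explicitly, which the paper leaves implicit.
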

\begin{proof}
1)
Let's check that for any $k[W]$-linear quadratic form $$q\colon Hom_{k[U]}(k[Z(e^n)],k[U]\simeq k[Z(e^n)]$$ ($Z(e^n)=Spec\, k[W]/(e^n)$) for all $i=0\dots n$ 
$(e^i)^\bot = (e^{n-i})^\bot$ where ideal $(e^i)$ is considered as subspace in $(k[Z(e^n)],q)$ and $-^\bot$ denotes orthogonal in respect to $q$.
Indeed,
\begin{multline*}
a\in (e^i)^\bot \Leftrightarrow
\langle a,e^i\cdot b\rangle_q=0\;(\forall b\in k[W]) \Leftrightarrow \\
\langle a\cdot e^i,b\rangle_q=0\; (\forall b\in k[W]) \Leftrightarrow
a\cdot e^i \in (e^n) \Leftrightarrow
a\in (e^{n-i})
\end{multline*} 
where 
the first equivalence is definition of orthogonal complement. 
The second one follows from that $q$ is linear over $k[W]$.  
The third follows from that $q$ is isomorphism and so kernel of pairing on $k[Z(e^n)]$ is zero 
(and kernel of the pairing on $k[W] $ is $(e^n) $). 
The last equality follows from that multiplication by $e$ in $k[W]$ is injective. 
(If it isn't injective, then $e$ is zero on some irreducible component of $W$. Dimensions of irreducible components or $W$ are the same as for $U\times Y$, and so dimension for any such component is bigger then $dim\, U$ by assumption on dimension of irreducuble components of $Y$, but by definition of $\mathcal P^Y_U$ $P\simeq k[Z(e^n)]$ is finite over $k[U]$ and so $Z(e^n)$ and $Z(e)$ are finite over $U$)

Thus if $n=2l$ then $(e^l)$ is lagrangian subspace in $(P,q)$ so $(P,q)$ is metabolic.
If $n=2l+1$ then $(e^{l+1})$ is sublagrangian with orthogonal $(e^{l})$ and by sublagrangian reduction $(P,q)\simeq ( (e^{l})/(e^{l+1}), q^\prime)$ 
and $(e^{l})/(e^{l+1})\simeq k[W]/(e)$

2)
The second claim follows immediately since $k[t]/(t)\simeq k$ and by first point $(k[t]/(t^n),q)\simeq (k[t]/(t),q^\prime) = (k,\lambda)$,$\lambda\in k^*$ so it is automorphism.
\end{proof}

\section{Rigidity theorem}
Let's recall that in this section we assume 
$char\,k\neq 2$.

\begin{theorem}\label{th_WCorSectRidg}
Let 
  $\ovbC$ be  a relative projective curve (see definition \ref{rm_relcurve}) over scheme $U$ 
    that is henselisation of smooth local scheme at closed point with separable residue filed over base field $k$.
Let $\bCinf\subset \ovbC$ be closed subsceme that is finite over $U$ and such that open complement $\ovbC-\bCinf$ is smooth over $U$.
Then for any two sections 
  $S_0,S_1\colon U\to \ovbC$ 
    coinciding in closed fibre, i.e. such that $S_0(u) = S_1(u)=x\in \ovC$ 
    (where $u\in U$ denotes closed point and $C\subset \bC$ denotes closed fibre of $\bC$)
$$\xymatrix{
\bC & C\ar@{^(->}[l]  &\\
U\ar@<-1ex>[u]_{S_1}\ar@<1ex>[u]^{S_0} & u\ar@{^(->}[l]\ar[u]^x
&,}$$
homotopy classes of sections $S_0$ and $S_1$ i.e. corresponding morphisms in the group $\overline{WCor}(U,\bC)$
are the same, i.e.
$$S_0,S_1\colon U\to \bC, S_0(u)=S_1(u)\implies [S_0] = [S_1] \in \overline{WCor}(U,\bC).$$
\end{theorem}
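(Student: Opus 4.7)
\medskip\noindent\textbf{Plan.} Following the Suslin--Voevodsky template adapted to Witt-correspondences, the strategy is to build an $\affl$-homotopy in $WCor(U\times\affl,\bC)$ connecting $[S_0]$ and $[S_1]$ through a common intermediate divisor $\phi^{-1}(1)$, using the curve-correspondence construction of Construction~\ref{const_OrWLf-WCor} applied to a rational function produced by Picard rigidity.

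\medskip\noindent Since $S_0(u)=S_1(u)=x$, the line bundle $\bO_{\ovbC}([S_0]-[S_1])$ is trivial on the closed fibre $\ovC$ and hence, by the Picard rigidity theorem for henselian pairs (recalled in the Appendix after \cite{SV_SingHomofSchemes}), globally trivial on $\ovbC$. This produces a rational function $\phi$ on $\ovbC$, equivalently a finite morphism $\phi\colon\ovbC\to\prl_U$, with $\phi^{-1}(0)=S_0$ and $\phi^{-1}(\infty)=S_1$. Since $\bCinf$ is disjoint from $S_0\cup S_1$, $\phi(\bCinf)\subset\bO_U^\times$, and after rescaling $\phi$ by a suitable unit (using that $U$ is henselian local) we arrange $\phi(\bCinf)$ to avoid any prescribed finite set of parameter values on the closed fibre.

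\medskip\noindent On the relative projective curve $\bV=\ovbC\times\affl\to U\times\affl$ with $\bV_\infty=\bCinf\times\affl$, the regular map $F=\phi-t\colon\bV\to\prl$ has $Z(F)=\{\phi=t\}$ finite over $U\times\affl$, disjoint from $\bV_\infty$, and non-constant on each fibre. Picking a trivialisation $\mu$ of the relative canonical class of $\ovbC/U$ (available since $\mathrm{Pic}(U)=0$), Construction~\ref{const_OrWLf-WCor} yields $H_\phi\in WCor(U\times\affl,\bC)$ with specialisations supported on $S_0$ at $t=0$ and on $\phi^{-1}(1)$ at $t=1$. An analogous construction with $1/\phi$ produces $H_{1/\phi}$ specialising to $S_1$ at $t=0$ and to $\phi^{-1}(1)$ at $t=1$, but with quadratic form twisted by $\langle-\phi^{-2}\rangle|_{\phi^{-1}(1)}=\langle-1\rangle$ relative to $H_\phi|_{t=1}$ (since $d(1/\phi)=-\phi^{-2}d\phi$). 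By $\affl$-homotopy invariance combined with Remark~\ref{rm_twistmap}, chaining these four specialisations yields an equality in $\overline{WCor}(U,\bC)$ of the shape $[S_0]\circ\langle q_0\rangle=[S_1]\circ\langle q_1\rangle\circ\langle-1\rangle$ for units $q_0,q_1\in\bO_U^\times$ read off from the Euler-trace formula at the two sections.

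\medskip\noindent The desired equality $[S_0]=[S_1]$ then reduces to showing that each of the rank-one endomorphisms $\langle q_0\rangle,\langle q_1\rangle,\langle-1\rangle\in WCor(U,U)$ equals the identity in $\overline{WCor}(U,U)$, which by Lemma-Remark~\ref{lm_unitclp-symb=unit} amounts to their residues at the closed point being squares in $k(x)$. The principal obstacle lies exactly here: tracking the three rank-one unit-class twists through the Euler-trace computation and absorbing any remaining sign by a judicious choice of $\phi$ (e.g.\ replacing $\phi$ by $-\phi$) or of the trivialisation $\mu$. The hypotheses $\mathrm{char}\,k\neq 2$, separability of $k(x)/k$, and henselian-ness of $U$ all enter essentially at this step; residual multiplicity contributions (e.g.\ ramification points of $\phi$ where $\phi^{-1}(1)$ has higher multiplicity, or contributions from the base locus if one is forced to blow up) are discarded using Lemma~\ref{lm_SqMet}.
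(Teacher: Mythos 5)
Your plan breaks down at its very first step, and the later steps inherit the damage. The Picard rigidity theorem (the one recalled in the Appendix) gives injectivity of $Pic(\ovbC)/n\to Pic(\ovC)/n$, so from $S_0(u)=S_1(u)$ you may only conclude that $\bL(S_0)\otimes\bL(S_1)^{-1}$ is an $n$-th power (the paper takes $n=2$, using $char\,k\neq 2$), \emph{not} that it is trivial. In general there is no rational function $\phi$ with $\phi^{-1}(0)=S_0$ and $\phi^{-1}(\infty)=S_1$, so the finite morphism your whole homotopy is built on need not exist. The paper instead keeps two distinct bundles $\bL_0,\bL_1$ with sections $s_0,s_1$ cutting out $S_0,S_1$, an isomorphism $\tau\colon\bL_0\simeq\bL_1\otimes\bL^2$ with $\bL$ trivial on $\ovC$, and normalises so that $\tau(s_0)|_{\ovC}=s_1|_{\ovC}\cdot\zeta^2$. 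Even granting your $\phi$, the map $F=\phi-t$ does not define an element of $OrCurF(U\times\affl,\bC)$: since you arrange $\phi(\bCinf)\subset\bO_U^\times\subset\affl$, the locus $\{\phi=t\}$ meets $\bCinf\times\affl$ at $t=\phi(c)$ for $c\in\bCinf$, violating the requirement $Z(f)\subset\bC$ of Definition \ref{def_OrCurF}. The paper's homotopy $(1-\lambda)\,\tau(s_0)d^2+\lambda\, s_1e^2$ avoids this precisely because the two endpoints are forced to \emph{agree and be invertible} on $\bCinf$ — which is where the henselian-ness of $\bCinf$ is used, to extract a square root $\overline{s}$ of $s_0s_1^{-1}|_{\bCinf}$ and lift it to the auxiliary sections $e,d$ — so the convex combination is constant in $\lambda$ on $\bCinf$ and its zero locus stays in $\bC\times\affl$.

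The final step is also not repairable as stated. In $WCor(U,U)$ the class $\langle-1\rangle$ is \emph{not} the identity unless $-1$ is a square in $k(u)$, and the whole point of this rigidity theorem is that the presheaf is not assumed torsion, so you cannot discard signs or unit twists "by a judicious choice of $\phi$". The paper resolves this with one homotopy (not a chain through $\phi^{-1}(1)$): it obtains $[S_0]\circ\langle q_0\rangle\sim[S_1]\circ\langle q_1\rangle$, then restricts the homotopy to the closed fibre, where $\affl\times x$ is an isolated component of the vanishing locus (this uses the equality of $\tau(s_0)d^2$ and $s_1e^2$ on the thickened point $2x$, arranged via $e\cdot d^{-1}|_x=\zeta|_x$); the form on that component is a single unit $q_x$, constant along $\affl$, giving $q_0(u)=q_x r_0^2$ and $q_1(u)=q_x r_1^2$. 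Hence $q_1q_0^{-1}$ is a square at $u$, and Lemma-remark \ref{lm_unitclp-symb=unit} (henselian-ness again) upgrades this to $\langle q_1 q_0^{-1}\rangle=id_U$. This mechanism for cancelling the two unit twists against each other is the essential content you would need to supply, and it is absent from your proposal.
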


\begin{corollary}\label{cor_RigidPresh}
Let $X$ is algebraic variety over a field $k$ with $char\,k\neq 2$ and $x$ is smooth closed point with separable residue field $k(x)/k$.
Then for any homotopy invariant presheave $\bF$ with Witt-transfers
$F(Spec\,\mathcal{O}^h_{X,x})\simeq \bF(x)$

\end{corollary}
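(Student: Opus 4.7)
The plan is to deduce the corollary from Theorem \ref{th_WCorSectRidg} by first producing a retraction of the closed immersion $i\colon x\hookrightarrow U$ and then showing that its associated endomorphism of $U$ equals the identity inside $\overline{WCor}(U,U)$. Since $k(x)/k$ is separable, Hensel's lemma applied to a separable minimal polynomial of a primitive element of $k(x)$ over $k$ lifts $k(x)$ to a subring of $\mathcal O^h_{X,x}$, yielding a morphism $\pi\colon U\to x$ with $\pi\circ i=\mathrm{id}_x$. Consequently $\pi^*\colon\bF(x)\to\bF(U)$ is a section of $i^*\colon\bF(U)\to\bF(x)$, so $i^*$ is already surjective; the remaining injectivity of $i^*$ is equivalent to $\pi^*\circ i^*=\mathrm{id}_{\bF(U)}$. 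Since $\bF$ factors through $\overline{WCor}_k$, this reduces to the single identity
\[
 [\mathrm{id}_U]=[i\pi]\in\overline{WCor}(U,U).
\]

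To establish this identity I plan to apply Theorem \ref{th_WCorSectRidg}. The goal is to construct a smooth relative projective curve $\ovbC\to U$, a finite closed subscheme $\bCinf\subset\ovbC$ with smooth complement, two sections $S_0,S_1\colon U\to\ovbC$ coinciding at the closed point $u$, and an auxiliary morphism of $k$-schemes $\varphi\colon\ovbC\to U$ (\emph{not} a morphism over $U$) satisfying $\varphi\circ S_0=\mathrm{id}_U$ and $\varphi\circ S_1=i\pi$. The theorem would then yield $[S_0]=[S_1]$ in $\overline{WCor}(U,\ovbC)$, and post-composition with the Witt-correspondence $[\varphi]$ (cf.\ remark \ref{rm_twistmap}) would deliver the desired equality in $\overline{WCor}(U,U)$.

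The main obstacle is the construction of this auxiliary geometric data. The graphs $\Delta_U$ and $\Gamma_{i\pi}$ inside $U\times_k U$ are both $\dim U$-dimensional, so they cannot both appear as sections of a single smooth relative curve over $U$ without some geometric reduction of dimension. My intended approach is to choose a regular system of parameters $t_1,\dots,t_d$ at $x$, slice $X$ near $x$ by $t_2=\dots=t_d=0$ to obtain a smooth curve $Y\subset X$ through $x$, and build $\ovbC$ as a smooth projective completion of the relative curve over $U$ arising from the fibre product along the smooth map $X\to\mathbb{A}^{d-1}_k$ defined by $t_2,\dots,t_d$, with $\varphi$ assembled from the natural projection onto the curve factor together with the retraction $\pi$. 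Verifying that both $\mathrm{id}_U$ and $i\pi$ are recovered after composition with $\varphi$, and that the smoothness and finiteness conditions of Theorem \ref{th_WCorSectRidg} are met, is the technical heart of the argument; both steps ultimately rely on the henselianness of $\mathcal O^h_{X,x}$ and the separability of $k(x)/k$.
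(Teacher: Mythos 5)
Your top-level reduction is sound and is essentially the paper's: after producing the retraction $\pi\colon U\to x$ (the paper obtains it by base-changing to $k(x)$ and identifying $\mathcal O^h_{X,x}$ with $\mathcal O^h_{X_{k(x)},\Delta_x}$ via a Nisnevich neighbourhood, which amounts to the same Hensel's-lemma lifting you describe), everything reduces to $[\mathrm{id}_U]=[i\pi]\in\overline{WCor}(U,U)$. The genuine gap is in the geometric step. If $\ovbC\to U$ is (a completion of) the fibre product of $U$ with $X$ over $\aff^{d-1}_k$ via $(t_2,\dots,t_d)$, then a section $U\to\ovbC$ is a pair $(\mathrm{id}_U,\sigma)$ with $\sigma\colon U\to X$ satisfying $t_j\circ\sigma=t_j\big|_U$ for $j=2,\dots,d$; but $t_j\circ(i\pi)$ is the constant $t_j(x)=0$ while $t_j\big|_U$ is non-constant for $d\geq 2$, so $i\pi$ does not define a section of your curve, and indeed no relative curve over $U$ can carry both $\Delta_U$ and $\Gamma_{i\pi}$ as sections unless $d=1$: a curve lets you collapse only one coordinate at a time. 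This is precisely why the paper argues by induction on $d$, proving $\bF(S_d)\simeq\bF(S_{d-1})$ for $S_d=Spec\,\mathcal O^h_{\aff^d,0}$ by comparing the canonical map $i\colon S_d\to U$ with $r\colon S_d\to S_{d-1}\hookrightarrow S_d\to U$ on the curve obtained by base change along $S_d\to\aff^d_k\to\aff^{d-1}_k$. Your slice construction is one step of that induction; without iterating it your argument only descends one dimension.

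Two further omissions. First, Theorem \ref{th_WCorSectRidg} requires a \emph{projective} relative curve $\ovbC\to U$ whose non-affine locus $\bCinf$ is finite over $U$ with smooth complement; the paper manufactures this by the Quillen trick, i.e.\ a finite surjection $f\colon U\to\aff^d_k$ built from successive projections centred at rational points at infinity avoiding $\overline U$, followed by base change and normalization. Your appeal to ``a smooth projective completion'' skips this, and the construction needs a finite-type Nisnevich neighbourhood (plus a colimit argument) since $Spec\,\mathcal O^h_{X,x}$ itself does not embed in affine space. Second, the choice of those centres of projection needs $k$ infinite; the paper handles finite $k$ separately by passing to odd-degree extensions $K/k$ and cancelling the resulting transfer $\langle p\rangle$ using Lemma \ref{lm_SqMet}. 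Your proposal would need both ingredients or substitutes for them.
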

\begin{proof}[Proof of the corollary \ref{cor_RigidPresh} ]

{\em case of infinite base field)}

Reduction of the statement to the theorem \ref{th_WCorSectRidg} is by standard argument (see \cite{SV_SingHomofSchemes}, proof of the theorem 4.4)  for example. Nevertheless let's present it.

Consider base change form $k$ to $k(x)$, since $k(x)$ is separable then diagonal $\Delta_x\subset x\times x$ is isolated component and since $X_{k(x)}\to X$ is etale then $(X_{k(x)}, \Delta_x)\to (X,x)$ is Nisnevich neighbourhood and then henselisation of $X_{k(x)}$ at $\Delta_x$ is equal to henselisation of $X$ at $x$ so we can assume that point $x$ is rational. 
 
Then let's note that for any variety $X$ of dimension $d$ with rational closed point $x$ henseliastion of $X$ is equal to henselization of $\aff^d$ at $0$ (because there exist embedding of $X$ in affine space and projection to target space of $X$ is separable at $x$ ).
So we can assume that $(X,x)$ is equal to $(\aff^d,0)$ and let denote $S_d=Spec\,\mathcal O^h_{\aff^d,0}$.

Now we prove that inclusion and projection morphisms between $S_d$ and $S_{d-1}$ induces isomorphisms 
$\bF(S_d)\simeq \bF(S_{d-1})$
 and then by induction the claim follows.
The composition $S_{d-1}\to S_d\to S_{d-1}$ is identity.
To show that the second composition induce the identity on $\bF(S_d)$ we show that for any Nisnevich neighbourhood $e\colon (U,u)\to (\aff^d,0)$ there is a homotopy in $WCor$ between the canonical morphism $i\colon S_d\to U$ 
and composition 
$r\colon S_d\to S_{d-1}\hookrightarrow S_d\to U$ (considered as Witt-correspondeces).

To reduce the last statement to the question on sections of relative curve over $S_d$ we use 
Quillen trick and firstly let's show how to construct finite morphism $f\colon U\to \aff^d_k$.

Consider 
any embedding of $U$ as proper closed subscheme in some affine space $j\colon U\hookrightarrow\aff^N_k$.
We get finite morphism $f$ as projection in the center at some subspace of dimension $N-d-1$ in infinity subspace $\pro^{N-1}_k$   and we find this subspace by consequently choosing rational $N-d$ points on infinity, or in other words we consequently consider projections in projective spaces with center at points $p_N,p_{N-1},\dots p_{d+1}$, $p_i$ laying in infinity subspace of $i$-dimensional projective space, such that each $p_i$ don't belong to the image of the projective closure of $U$ under the previous projection. The next paragraph explains this precisely. 

Namely, consider affine space as open subscheme in projective space $\aff^N_k\subset\pro^N_k$,
And let $\overline{U} = Cl_{\pro^n}(j(U))$, $B=\overline U\cap \pro^{n-1}$ be closure of the image of $U$ in projective space  and its intersection with inifinite projective subspace $\pro^{N-1}_k$ (i.e. complement $\pro^N_k\setminus \aff^N_k$).
Since $U$ is proper closed subscheme in $\aff^N$ then $d<N$ 
and since $dim\,B<dim\,\overline{U}=dim\,U=d$, then
$B$ is proper closed subscheme in $\pro^{N-1}_k$.
If $k$ is infinite then there is a closed rational point $p_N\in \pro^{N-1}_k$, $p_N\not\in B$ and let define morphism $f_{N-1}\colon \overline{U}\to \pro^{N-1}_k$ as projection with center $p_N$.
That fact that $p_N$ does not lays in $B$ and consequently in $\overline{U}$ implies that $f_{N-1}$ is finite morphism.
(Indeed projection in the center $p_{N}$ is projective morphism $pr_N\colon Bl_{p_N}(\pro^N_k)\to \pro^{N-1}_k$ and since $p_N\not\in \overline{U}$ preimage of $\overline{U}$ is $Bl_{p_N}(\pro^N_k)$ is isomorphic to $\overline{U}$, so we get well defined projective morphism $\overline{U}\to \pro^{N-1}_k$. Again since $p_N\not\in \overline{U}$ and so preimage of $\overline{U}$ doesn't intersect with exceptional divisor in $Bl_{p_N}(\pro^N_k)$, then preimage of $\overline{U}$ doesn't contain any fibre of the projection $pr_N$. And since this fibres are projective lines, and hence $f_{N-1}$ is quasi-finite morphism, then it is finite.)
So we get finite morphism $f_{N-1}\colon \overline{U}\to \pro^{N-1}_k$ and since $p_N$ lays in infinite subspace of $\pro^N_k$, then $f_{N-1}$ sends $U$ to affine subspace $\aff^{N-1}_k$ and $B$ -- to infinite subspace $\pro^{N-2}=\pro^{N-1}\setminus \aff^{N-1}_k$. 
If $d<N-1$ then again there is a point $p_{N-1}\in\pro^{N-2}_k$, $p_{N-1}\not\in f_{N-1}(B)$, and projection at the center $p_{N-1}$ give us finite morphism $f_{N-2}\colon \overline{U}\to \pro^{N-2}_k$ that sends $U$ to $\aff^{N-1}_k$ and $B$ to infinite subspace. So consequently choosing center of projections, finally we get finite morphism $f_d\colon \overline{U}\to \pro^d_k$  and $f\colon U\to \aff^N_k$.

Now consider any linear projection $\aff^d_k\to \aff^{d-1}_k$ and base change of $f$ along composition map $S_d\to \aff^d_k\to\aff^{d-1}$ that is finite morphism of relative curves $\bC\to \affl_{S_d}$ over $S_d$
and next consider normalisation $\bC\hookrightarrow \ovbC \to \prl_{S_d}$ of composition morphism $\bC\to \affl_{S_d}\hookrightarrow \prl_{S_d}$.
$$
\xymatrix{
\bC\ar[r]^v \ar[d] \ar[rd]|>>>>>>v & \affl_{S_d}\ar@/_/[ld] \ar[rd] \\
S_d\ar[rd] & U\ar[r]^f \ar[d] & \aff^d_k\ar[ld]\\
 &\aff^{d-1}_k
}
\quad\quad
\xymatrix{
\ovbC\ar[r] & \prl_{S_d} \\
\bC\ar[r] \ar[d]\ar@{^(->}[u]& \affl_{S_d}\ar[d]\ar@{^(->}[u]\\
S_d \ar@{=}[r] & S_d
}  $$
Since $\overline{f_{S_d}}$ is finite inverse image of $\bO(1)$ is ample invertible sheave on $\ovbC$.
Morphisms $i$ and $r$ from $S_d$ to $U$ define sections $g_0,g_1\colon S_d\to\bC$ that coincide at closed fibre,
and applying theorem \ref{th_WCorSectRidg} to relative curves $\ovbC$, $\bC$ and ample sheave $f_{S_d}^*(\bO(1))$ we get that
$[g_0]=[g_1]\in WCor(S_d,\bC)$ and hence $[i]=v \circ [g_0]=v\circ [g_1]=[r]\in WCor(S_d,U)$

{\em (case of finite base field)}
In previous reasoning we use infiniteness of base field $k$ in choices of rational points $p_{n}\in\pro^{n}_k$, $p_n\not\in f_n(B)$.  
In the case of finite base field $k$ we consider finite odd extensions $K/k$ and base change of all previous constructions to $K$. 
Since $deg\,f_n(B)=deg\, B$ is fixed (and since projective space over the finite field $K$ has at least $\#K$ rational points (where $\#K$ is the number of elements in $K$) ), for all extensions $K/k$ with $deg\,K/k$ bigger some $M$ we can find $K$-rational points $p_n\in \pro^n_K$, $p_n\not\in f_n(B)$, and so theorem \ref{th_WCorSectRidg} yield the statement of corollary \ref{cor_RigidPresh} for presheaves on $Sm_K$ for all extensions $K/k$ with $deg\,K/k>M$. Now let's choose some primitive separable extension $K/k$ of odd degree bigger then $M$.

Let $p=t^n+a_{n-1}t^{n-1}+\dots a_0$ be irreducible polynom defining extension $K/k$.
Then $Z(p)\subset \affl_k$ is isomorphic to $Spec\, K$, so construction \ref{const_OrWLf-WCor} gives us morphism 
$\varepsilon = \langle p\rangle \in WCor(pt_k,pt_K)$.
(Namely this morphism is defined by quadratic form $tr(-\cdot -)$ on $K$ (over $k$) where $tr\colon K\to k$ is the trace and it is equal to base change along closed embedding $Spec\,k=0\hookrightarrow \affl_k$ of the quadratic form on $k[t]$ defied by Euler trace .)

Denote $pt_k=Spec\,k$, $pt_K=Spec\,K$. 
Consider diagram 
$$\xymatrix{
U\times pt_K\ar[r]^-{p_U} & U\\
S\times pt_K\ar@<1ex>[u]^{{S_0}_K}\ar[u]_{{S_1}_K}\ar@<1ex>[r]^-{p} & S\ar@<1ex>[u]^{S_0}\ar@<-1ex>[u]_{S_1}\ar@<1ex>[l]^-{\epsilon}
}$$
Now let's show that composition $p\circ \varepsilon\colon WCor(pt_k,pt_k)$ has the same homotopy class as some automorphism $\langle\lambda\rangle\in WCor(pt_k,pt_k)$. 
In fact polynom $h= p\cdot(1-\lambda)+t^n\cdot \lambda$
defines homotopy $\langle h , \affl\times {\affl_k}\to pt_k \rangle\in WCor(\affl_k,pt_k)$
between $p\circ \varepsilon = \langle p , \affl_k\to pt_k \rangle$ and $\langle t^n, \affl_k\to pt_k\rangle$  in $WCor(pt_k,pt_k)$.
And by lemma \ref{lm_SqMet} $\langle t^n, \affl_k\to pt_k\rangle = \langle \lambda \rangle$ for some $\lambda\in k^*$. 

So 
$$0 = p_U \circ [{S_0}_K]-[{S_1}_K]\circ \varepsilon =[{S_0}]-[{S_1}] \circ p\circ\varepsilon = [{S_0}]-[{S_1}] \circ \langle\lambda\rangle,$$ and $[{S_0}]-[{S_1}]=0$.

\end{proof}
\begin{corollary}
For a smooth variety $X$ over field $k$ , $char k\neq 2$ and for a closed point $x$ on $X$ with separable residue field $k(x)/k$
and any smooth variety $Y$
$$W^i(Y\times \mathcal Spec\,O^h_{X,x})\simeq W^i(Y\times Spec\,k(x))$$
where $W^i$ $(i\in \mathbb Z/4\mathbb Z)$ derived(higher) Witt groups.
\end{corollary}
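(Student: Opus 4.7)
The plan is to apply Corollary~\ref{cor_RigidPresh} to the presheaf $\bF = W^i(Y\times -)$, viewed as a presheaf on smooth affine varieties. Two hypotheses must be checked: that $\bF$ carries the structure of a presheaf with Witt-transfers, and that it is homotopy invariant. When $Y$ itself is smooth affine, Proposition~\ref{prop_WittgrWitttr} gives the transfer structure directly, and Balmer's $\affl$-homotopy invariance of the derived Witt groups \cite{Bal_HomInvWitt}, applied to the smooth scheme $Y\times U$ for each smooth affine $U$, gives $\bF(U\times \affl)\simeq \bF(U)$. Feeding $\bF$ into Corollary~\ref{cor_RigidPresh} then yields
$$W^i(Y\times Spec\,\mathcal{O}^h_{X,x}) = \bF(Spec\,\mathcal{O}^h_{X,x})\simeq \bF(Spec\,k(x)) = W^i(Y\times Spec\,k(x)).$$

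For a general smooth $Y$ (not necessarily affine), I would pick a finite Zariski cover of $Y$ by smooth affine opens $\{Y_\alpha\}$ whose pairwise (and higher) intersections remain smooth affine; a principal-open cover works. Since the functors $-\times Spec\,\mathcal{O}^h_{X,x}$ and $-\times Spec\,k(x)$ commute with open immersions and intersections, the Mayer--Vietoris long exact sequences for the derived Witt groups on these two families fit into a commutative ladder in which the vertical map on every affine piece is an isomorphism by the affine case already treated. A straightforward five-lemma induction on the number of opens in the cover then propagates the isomorphism to all of $Y\times -$.

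The main obstacle is essentially the Zariski descent in the non-affine case, since Proposition~\ref{prop_WittgrWitttr} and Corollary~\ref{cor_RigidPresh} are only available for presheaves on smooth affine varieties. This is handled routinely by Mayer--Vietoris once one recalls that derived Witt groups satisfy Zariski descent with a $\mathbb{Z}/4$-periodic long exact sequence. With this in hand the corollary reduces to assembling ingredients already developed: the Witt-transfer structure on $W^i(Y\times -)$, Balmer's homotopy invariance, and the rigidity statement of Corollary~\ref{cor_RigidPresh}.
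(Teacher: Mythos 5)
Your proposal follows the paper's own argument exactly: for affine $Y$ it invokes Proposition~\ref{prop_WittgrWitttr} for the transfer structure and Balmer's homotopy invariance to apply Corollary~\ref{cor_RigidPresh}, and for general smooth $Y$ it reduces to the affine case via Mayer--Vietoris, which is precisely the (tersely stated) route taken in the paper. Your write-up merely spells out the five-lemma induction that the paper leaves implicit.
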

\begin{proof}
Since presheaves $W^i(Y\times - )$ for smooth affine $Y$ are presheaves with Witt-transfers by proposition \ref{prop_WittgrWitttr} 
and since this presheaves are homotopy invariant (by homotopy invariance of the presheaves $W^i(-)$)
previous corollary implies the statement for smooth affine $Y$.
The statement for general smooth $Y$ follows by Mayer-Vietoris property 
\end{proof}

Before we give proof of the theorem \ref{th_WCorSectRidg} firstly let's briefly explain the idea.
{\par\em Idea of the proof of theorem \ref{th_WCorSectRidg}.}
We want to prove that regular maps $S_0,S_1\colon U\to \bC$ coinciding in closed fibre define the same 
classes up to $\affl$-homotopies of Witt-correspondences. 
As was shown in construction \ref{const_OrWLf-WCor} Witt-correspondence 
can be defined by trivialisation of canonical class of $\bC$ (that exist after shrinking of $\bC$) and rational function on the compactification $\ovbC$.

Rational function can be presented as fraction of pair of global sections $s,t$ of line bundle $\bL$ on $\ovbC$ or after shrinking again of $\bC$ as a global section and (good)trivialization of this line bundle on $\bC$.
Under such correspondence $WCor$-morphisms defined by regular maps $S_0,S_1$    
relate to sections $s_0\in \Gamma(\ovbC,\bL(S_0))$, $div\,s_0=S_0$ , $s_1\in \Gamma(\ovbC,\bL(S_1))$, $div\,s_1=S_1$, and since closed fibre of $S_0$ and $S_1$ are the same then $s_0$ and $s_1$ can be chosen in such way that their closed fibres coincide.  
The mentioned correspondence between $WCor(U,\bC)$ and sections on line bundles on $\ovbC$ is (partial) additive in some sense and if we continue it to rational sections then difference $[S_0]-[S_1]$ corresponds to fraction $s_0/s_1$. 

$\affl$-homotopies don't change line bundle and if we don't change its trivialisation on $\bC$ too then class up to $\affl$-homotopy of the Witt-correspondence is defined by restriction of the section on the complement $\ovbC\setminus\bC$.
So class of $[S_0]-[S_1]$ in $\overline{WCor}(U,\bC)$  is defined by $(s_0/s_1)\big|_{\ovbC-\bC}$.

By rigidity for Picard group (and since $ckar\, k\neq 2$) $\bL(S_0)\otimes \bL(S_1)^{-1}$ is square.
Scheme $\ovbC\setminus\bC$ is finite over henselian scheme $U$ (we can make all mentioned shrinks of $\bC$ in such that complement remains to be finite),
hence it is henselian and since $s_0$ and $s_1$ coincide at closed fibre then $(s_0/s_1)\big|_{\ovbC-\bC}$ is square.
But square of section define zero morphism in $WCor$ by lemma \ref{lm_SqMet} (here we mean some lift of square root of $(s_0/s_1)\big|_{\ovbC-\bC}$ to rational section $e/d$ on $\ovbC$).

\vspace{5pt}

In the following proof we avoid using of rational sections and implicit trivialisations of line bundles. So homotopy connecting $[S_0]$ with $[S_1]$ is defined by Witt-correspondence constructed from the map $$[((1-t)\cdot s_0\cdot d^2+ t\cdot s_1\cdot e^2) \colon r]\colon \ovbC\to \prl ,$$ where $e\in \Gamma(\ovbC,\bL(l))$, $\bL=\bL(S_0)\otimes\bL(S_1)^{-1}$, $d\in\Gamma(\ovbC,\bO(l))$,  $r\in \Gamma(\ovbC,\bL(S_0)\otimes\bO(l))$, and consequently choosing these and some additional auxiliary sections we accurate control that vanish locus don't intersect.
  
\qed

\begin{proof}{Proof of the theorem \ref{th_WCorSectRidg}}

During the proof we will need to shrink $\bC$ (and extend $\bCinf$), so
it will be useful to involve the following definition-denotation.
\begin{definition}\label{def_subcomcurve}
Let's call by compactified curve over $U$ 
  a set $(\bC,\ovbC,\bCinf,\bO(1))$ 
    where $\ovbC$ is smooth projective relative curve over base $U$, 
    $\bCinf\subset \ovbC$ is closed subscheme finite over $U$, 
    $\bC$ is its complement i.e. $\bC = \ovbC-\bCinf$, 
    and $\bO(1)$ is any ample bundle on $\ovbC$. 
In short we'll call that $\bC = \ovbC-\bCinf$ is compactified curve 
  omitting ample bundle if it can be recovered from the context .

Let's say that compactified curve $(\sbC,\ovbC,\sbCinf,\bO(1))$ 
  is subcurve of compactified curve $(\sbC,\ovbC,\bCinf,\bO(1)^\prime)$ 
  if $\sbC\subset \bC$ and $\bO(1)^\prime=\bO(k)$ for some $k> 0$.
  
\end{definition}

Now we prove some lemmas that allow to equip relative curve $\ovbC$ with
  linear bundles and its sections that will be used in the construction of $Witt$-correspondence.
We divide construction of this data 
  in three steps-lemmas, 
and note that in formulations of these lemmas 
  denotations $\bC$, $\bCinf$ and $\bO(1)$ are used as free terms, 
  i.e. it shouldn't necessarily be equal to $\bC$ and $\bCinf$ from formulation of the theorem 
  (and in fact the third lemma will be applied to a subcurve $(\sbC,\ovbC,\sbCinf,\bO(1)^\prime)$ constructed in the second lemma).

\begin{lemma}\label{lm_s0s1zeta}
For any compactified curve $(\bC,\ovbC,\bCinf,\bO(1))$ 
with sections $S_0,S_1\colon U\to\bC$ coinciding at closed point $S_0(u)=S_1(u)=x$
there are linear bundles $\bL_0$, $\bL_1$, $\bL$
with following sections and isomorphisms
\begin{equation}\label{eq-sectcond1}\begin{aligned}
s_0\in \Gamma(\ovbC,\bL_0),  Z(s_0)=S_0,  \quad
s_1\in\Gamma(\ovbC,\bL_1),  Z(s_1)=S_1,\\
\zeta\in\Gamma(\ovC,\bL),\;
\tau\colon \bL_0\simeq\bL_1\otimes\bL^2,\;
 \tau(s_0)\big|_{\ovC}=s_1\big|_{\ovC}\cdot \zeta^2 
\end{aligned}\end{equation}
\end{lemma}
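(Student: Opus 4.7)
The plan is to take the natural candidates $\bL_0 = \bO(S_0)$ and $\bL_1 = \bO(S_1)$, with $s_0, s_1$ their canonical global sections cut out by the divisors $S_i$ (so that $Z(s_i) = S_i$). Since $S_0$ and $S_1$ restrict to the same point $x$ on the closed fibre $\ovC$, the line bundle $\bL_0 \otimes \bL_1^{-1}$ restricts to $\bO_{\ovC}(x-x) \simeq \bO_{\ovC}$, i.e.\ it lies in the kernel of the restriction map $\operatorname{Pic}(\ovbC) \to \operatorname{Pic}(\ovC)$.

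The central ingredient is rigidity for the Picard group, to be proved in the Appendix following the method of \cite{SV_SingHomofSchemes}: for $\ovbC \to U$ smooth projective with henselian local base and $2$ invertible in the residue field, the restriction induces isomorphisms $\operatorname{Pic}(\ovbC)/2 \xrightarrow{\sim} \operatorname{Pic}(\ovC)/2$ and $\operatorname{Pic}(\ovbC)[2] \xrightarrow{\sim} \operatorname{Pic}(\ovC)[2]$. From the first, $\bL_0 \otimes \bL_1^{-1} \simeq \bL^2$ for some $\bL \in \operatorname{Pic}(\ovbC)$, since its image in $\operatorname{Pic}(\ovC)/2$ is trivial. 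The restriction $\bL|_{\ovC}$ is then $2$-torsion, so by the second isomorphism it lifts to a $2$-torsion $\eta \in \operatorname{Pic}(\ovbC)$; replacing $\bL$ by $\bL \otimes \eta^{-1}$ we arrange $\bL|_{\ovC} \simeq \bO_{\ovC}$ while preserving $\bL^2 \simeq \bL_0 \otimes \bL_1^{-1}$.

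It remains to produce $\tau$ and $\zeta$. Fix any isomorphism $\tau_0 \colon \bL_0 \simeq \bL_1 \otimes \bL^2$ and any nowhere-vanishing $\zeta_0 \in \Gamma(\ovC, \bL|_{\ovC})$ trivializing $\bL|_{\ovC}$. On the closed fibre both $\tau_0(s_0)|_{\ovC}$ and $s_1|_{\ovC} \cdot \zeta_0^2$ are sections of $(\bL_1 \otimes \bL^2)|_{\ovC}$ with the same divisor $x$, so $\tau_0(s_0)|_{\ovC} = \phi \cdot s_1|_{\ovC} \cdot \zeta_0^2$ for a unique unit $\phi \in \Gamma(\ovC, \bO^*)$. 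After replacing $\ovbC$ by the connected component through $S_0(U)$, which has geometrically integral closed fibre (smooth plus the $k(u)$-rational point $x$), we have $\Gamma(\ovC, \bO^*) = k(u)^*$, hence $\phi \in k(u)^*$. Modifying $\tau_0$ by a global unit $\lambda \in \Gamma(\ovbC, \bO^*) = \bO_U^*$ rescales $\phi$ by $\lambda|_u \in k(u)^*$, and since the residue map $\bO_U^* \twoheadrightarrow k(u)^*$ is surjective we may pick $\lambda$ with $\lambda|_u = \phi^{-1}$. Then $\tau := \lambda \cdot \tau_0$ and $\zeta := \zeta_0$ satisfy the required identity $\tau(s_0)|_{\ovC} = s_1|_{\ovC} \cdot \zeta^2$.

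The main obstacle is the rigidity for the Picard group, and in particular the surjectivity on $2$-torsion: without it $\bL|_{\ovC}$ could be a non-trivial $2$-torsion line bundle on $\ovC$, which as a degree-$0$ bundle admits no non-zero global sections, making the existence of $\zeta$ impossible. The rest of the argument is bookkeeping on units, using henselianity of $U$ to lift residual units and geometric integrality of $\ovC$ to identify $\Gamma(\ovC, \bO^*)$ with $k(u)^*$.
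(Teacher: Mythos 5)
Your proof is correct and follows essentially the same route as the paper: take $\bL_i=\bO(S_i)$ with their canonical sections, use Picard rigidity to write $\bL_0\otimes\bL_1^{-1}$ as $\bL^2$ with $\bL\big|_{\ovC}$ trivial, and then absorb the resulting constant unit on the projective closed fibre by a unit lifted from $U$ (you rescale $\tau$ where the paper rescales $s_1$; these are equivalent). The one substantive difference is that you explicitly arrange $\bL\big|_{\ovC}$ to be trivial rather than merely $2$-torsion by invoking surjectivity of $Pic(\ovbC)[2]\to Pic(\ovC)[2]$; this is a real gap in the paper's one-line appeal to "rigidity for Picard group" (the Appendix only proves injectivity of $Pic/n\to Pic/n$), and your fix is correct, though you should note that the torsion statement needs the same proper base change argument in degree $1$ together with the comparison of units $\bO_U^*/2\simeq k(u)^*/2$ via Hensel's lemma, which is not literally in the Appendix.
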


\begin{lemma}\label{lm_s0-s1-zeta-t0-unit}
For any compactefied curve $(\bC,\ovbC,\bCinf,\bO(1))$ with sections $S_0,S_1\colon U\to\bC$, $S_0(u)=S_1(u)=x$
there is a subcurve 
$(\sbC,\ovbC,\sbCinf,\bO(1)^\prime)$,
such that 
   images of $S_0$ and $S_1$ lies in $\sbC$,
  there are
  linear bundles $\bL_0$, $\bL_1$, $\bL$ on $\ovbC$ 
  and sections and isomorphisms $s_0$,$s_1$,$\zeta$,$\tau$ satisfying conditions \eqref{eq-sectcond1} and 
  in addition there are following sections 
\begin{equation*}\label{eq-sectcond2}\begin{aligned}
unit\in\Gamma(\ovbC,\bO(1)^\prime), Z(unit)\subset \sbCinf, \\
triv\in\Gamma(\ovbC,\bL_0(1)^\prime) , \, Z(triv)\subset \sbCinf, \\
\end{aligned}\end{equation*} 
(where $\bL_0(1)^\prime=\bL_0\otimes \bO(1)^\prime$)
\end{lemma}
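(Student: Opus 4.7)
\noindent\emph{Proof plan.} The plan is to invoke Lemma \ref{lm_s0s1zeta} to obtain $\bL_0, \bL_1, \bL$ together with $s_0, s_1, \zeta, \tau$, and then to use ampleness of $\bO(1)$ to produce the additional sections $unit$ and $triv$ after replacing $\bO(1)$ by a sufficiently high power $\bO(k) =: \bO(1)^\prime$. All the data already produced by Lemma \ref{lm_s0s1zeta} will transfer unchanged to the resulting subcurve, since it depends only on $\ovbC$ and on $S_0,S_1$.

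Concretely, I would first apply Lemma \ref{lm_s0s1zeta} to the given compactified curve $(\bC, \ovbC, \bCinf, \bO(1))$ to obtain $\bL_0, \bL_1, \bL, s_0, s_1, \zeta, \tau$ satisfying \eqref{eq-sectcond1}. Next, I would pick one closed point on each irreducible component of the closed fibre $\ovC$ of $\ovbC \to U$, obtaining a finite set $\Sigma \subset \ovC$. I then choose $k \gg 0$ such that both $\bO(k)$ and $\bL_0 \otimes \bO(k)$ are globally generated with sections separating any prescribed finite collection of closed points of $\ovC$; set $\bO(1)^\prime = \bO(k)$, and choose sections $unit \in \Gamma(\ovbC, \bO(1)^\prime)$ and $triv \in \Gamma(\ovbC, \bL_0 \otimes \bO(1)^\prime)$ that are nonzero at $x$ and at every point of $\Sigma$. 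Finally, set $\sbCinf = \bCinf \cup Z(unit) \cup Z(triv)$ and $\sbC = \ovbC \setminus \sbCinf$.

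With this setup the required conditions are straightforward. The equalities \eqref{eq-sectcond1} hold verbatim; the inclusions $Z(unit), Z(triv) \subset \sbCinf$ hold by construction; the tuple $(\sbC, \ovbC, \sbCinf, \bO(1)^\prime)$ is a subcurve of $(\bC, \ovbC, \bCinf, \bO(1))$ in the sense of Definition \ref{def_subcomcurve} since $\sbC \subset \bC$ and $\bO(1)^\prime = \bO(k)$; and the images of $S_0, S_1$ lie in $\sbC$ because $U$ is henselian local and $S_i(U) \cong U$, so a section of a line bundle on $\ovbC$ pulled back to $S_i(U)$ is a unit iff it is nonzero at the closed point $x$, and $unit, triv$ were chosen nonzero at $x$.

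The only genuinely nontrivial step, and the main obstacle, is that $\sbCinf$ must be finite over $U$. Each of $Z(unit), Z(triv)$ is a closed subscheme of the proper $\ovbC/U$ and, by non-vanishing on $\Sigma$, contains no irreducible component of $\ovC$, so its closed fibre is zero-dimensional. A proper morphism with finite fibre at a point is finite over a Zariski neighborhood of that point, and since $U$ is local henselian this propagates to finiteness over all of $U$; combined with the hypothesis that $\bCinf$ is already finite over $U$, this gives finiteness of $\sbCinf$. This argument depends essentially on ampleness of $\bO(1)$ (to find sections avoiding $\Sigma$) and on the local henselian hypothesis on $U$ (to upgrade finiteness over the closed fibre to finiteness over $U$).
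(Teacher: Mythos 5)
Your proposal is correct and follows essentially the same route as the paper's proof: apply Lemma \ref{lm_s0s1zeta}, pick one point per irreducible component of the closed fibre, use Serre's theorem on powers of the ample bundle to find $unit$ and $triv$ nonvanishing at those points and at $x$, and enlarge $\bCinf$ by their zero loci, which are finite over $U$ because their closed fibres are zero-dimensional. Your treatment of the finiteness of $\sbCinf$ over the henselian local base is in fact slightly more detailed than the paper's, which simply asserts it.
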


\begin{lemma}\label{lm_s0s1zeta->es-ds}
Let $(\bC,\ovbC,\bCinf,\bO(1))$ be compactified curve with sections $S_0,S_1\colon U\to\bC$, $S_0(u)=S_1(u)=x$, 
and $\bL_0,\bL_1,\bL$, $s_0,s_1,\zeta,\tau$ be line bundles , sections , and isomorphisms satisfying \eqref{eq-sectcond1}.
Then for some large integer $l$  there are sections
\begin{equation}\label{eq-esds}\begin{aligned}
d\in\Gamma(\ovbC,\bO(l)), Z(d)\subset \sbC-x, \\
 e\in \Gamma(\ovbC,\bL(l)), Z(e)\subset \sbC-x, \\
(e\cdot d^{-1})\big|_{\sbCinf}^2= s_0\cdot s_1^{-1}, e\cdot d^{-1}\big|_{x}= \zeta\big|_{x}
\end{aligned}\end{equation}
(where the first equality is considered up to isomorphism $\tau\otimes \bL_1^{-1}\colon \bL\simeq \bL_0\otimes \bL_1^{-1}$)
And these equalities imply  
 that $$(\tau(s_0)\cdot d^{2})\big|_{\sbCinf\coprod 2x}= (s_1\cdot e^{2})\big|_{\sbCinf\coprod 2x}$$
(where $2x$ denotes closed subscheme $Spec\,k[C]/I(x)^2$ of closed fibre $C$ of $\bC$)

\end{lemma}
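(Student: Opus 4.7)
The plan is to construct $d$ and $e$ by interpolating prescribed values on the finite closed subscheme $Z=\bCinf\sqcup\{x\}\subset\ovbC$, in such a way that the rational section $e/d$ of $\bL$ squares to $s_0/s_1$ on $\bCinf$ and specializes to $\zeta$ at $x$. Ampleness of $\bO(1)$ provides the interpolation, while henselianness of $\bCinf$ provides the square root.

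First, note that $Z$ is semilocal: $\bCinf$ is finite over the henselian local $U$, so its coordinate ring is a product of henselian local rings, and $\{x\}$ contributes one further local factor. Hence every line bundle on $Z$ is trivial. By Serre vanishing for the ample $\bO(1)$, choose $l$ large enough that the two restriction maps
\[
 \Gamma(\ovbC,\bO(l)) \to \Gamma(Z,\bO(l)|_Z), \qquad \Gamma(\ovbC,\bL(l)) \to \Gamma(Z,\bL(l)|_Z)
\]
are surjective. Pick a trivializing (hence nowhere vanishing) section of $\bO(l)|_Z$ and lift it to $d\in\Gamma(\ovbC,\bO(l))$; then $Z(d)\subset\ovbC\setminus Z=\bC-x$.

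Next construct a square root $\sigma\in\Gamma(\bCinf,\bL)$ of $s_0/s_1$, read via the isomorphism $\bL^2\simeq \bL_0\otimes\bL_1^{-1}$ induced by $\tau$. Since $s_1$ is nowhere zero on $\bCinf=\ovbC\setminus\bC$, the element $\tau(s_0)/s_1\in\Gamma(\bCinf,\bL^2)$ is a unit, and its restriction to the closed fibre $\bCinf\cap\ovC$ equals $\zeta^2$ by the hypothesis $\tau(s_0)|_{\ovC}=s_1\cdot\zeta^2$. Since $\mathrm{char}\,k\neq 2$ and the coordinate ring of $\bCinf$ is henselian, Hensel's lemma applied to $X^2=\tau(s_0)/s_1$ yields a unique $\sigma$ with $\sigma^2=\tau(s_0)/s_1$ and $\sigma|_{\bCinf\cap\ovC}=\zeta|_{\bCinf\cap\ovC}$. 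Now take $e\in\Gamma(\ovbC,\bL(l))$ to be any lift of the element of $\Gamma(Z,\bL(l)|_Z)$ equal to $\sigma\cdot d|_{\bCinf}$ on $\bCinf$ and to $\zeta|_x\cdot d|_x$ at $x$. Because $\sigma$ is a unit on $\bCinf$, and $\zeta|_x\neq 0$ (its square $(s_0/s_1)|_x$ is the nonzero ratio of leading coefficients of the order-one zeros of $s_0|_{\ovC}$ and $s_1|_{\ovC}$ at $x$), the section $e$ is nowhere vanishing on $Z$, so $Z(e)\subset\bC-x$.

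Finally verify the implied identity on $\bCinf\sqcup 2x$. On $\bCinf$ it reduces to $(e/d)^2=\sigma^2=\tau(s_0)/s_1$, which holds by construction. On $2x\subset\ovC$, using $\tau(s_0)|_{\ovC}=s_1\cdot\zeta^2$ we factor
\[
 \tau(s_0)\cdot d^2 - s_1\cdot e^2 \;=\; s_1\cdot(\zeta d - e)\cdot(\zeta d + e).
\]
Here $s_1$ vanishes at $x$ (since $x\in S_1=Z(s_1)$), and $\zeta d - e$ vanishes at $x$ by the prescribed value $e|_x=\zeta|_x d|_x$, while $(\zeta d+e)|_x=2\zeta|_x d|_x\neq 0$; so the right-hand side lies in the square of the ideal of $x$ in $\ovC$ and therefore vanishes on $2x$. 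The main obstacle is constructing $\sigma$: it depends essentially on the henselianness of $\bCinf$ as a finite $U$-scheme and on $\mathrm{char}\,k\neq 2$ for Hensel's lemma for square roots. The remaining steps are routine use of ampleness (to interpolate prescribed values by global sections for $l$ large) and order counting at the point $x$.
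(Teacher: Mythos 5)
Your proposal is correct and follows essentially the same route as the paper: extract a square root of $\tau(s_0)/s_1$ over the henselian finite $U$-scheme $\bCinf$ (normalized against $\zeta$ on the closed fibre, using $char\,k\neq 2$), use Serre's theorem for the ample $\bO(1)$ to lift the prescribed values on the finite closed subscheme to global sections $d,e$ for $l\gg 0$, and check the congruence on $2x$ by an order count at $x$. The only cosmetic differences are that you interpolate over $\bCinf\sqcup\{x\}$ rather than over $S_0\cup\bCinf$ and $S_1\cup\bCinf$ as the paper does, and your factorization $s_1\cdot(\zeta d-e)(\zeta d+e)$ makes the $2x$-step more explicit than the paper's.
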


\begin{proof}[Proof of lemma \ref{lm_s0s1zeta}]
Let's denote $\bL_0=\bL(S_0)$ and $\bL_1=\bL(S_1)$ and choose any sections $s_0\in \Gamma(\ovbC,\bL_0)$ $Z(s_0)  = S_0$ and $s_0^\prime\in Gamma(\ovbC,\bL_1)$ $Z(s_1) = S_1$.
Since $S_0$ and $S_1$ are coincide at closed fibre with the point $x\in \ovC$ 
then $\bL_0\big|_{\ovC}\simeq\bL_1\big|_{\ovC}\simeq \bL(x)$.
By rigidity for Picard group there is a line bundle $\bL$ trivial on $\ovC$ with an 
isomorphism $\tau\colon \bL_0\simeq\bL_1\otimes\bL^2$. 

Let's then choose any trivialisation (invertible section) $\zeta\in \Gamma(\ovC,\bL\big|_{\ovC})$ and consider fraction $$\overline{\lambda}=(\tau(s_0)\cdot {s_1^\prime}^{-1})\big|_{\ovC}\cdot \zeta^{-2}.$$  
Formally by definition it is rational section of $\bO(\ovbC)\,(\simeq \bL_0\otimes\bL_1^{-1}\otimes\bL^{-2}\big|_{\ovC})$ (i.e. rational function) on projective curve $\ovC$ over $k(u)$. 
But since $div\,s_0\big|_{\ovC}=div\,s_1\big|_{\ovC}=x$ and hence $div\,s_0/s_1^\prime =0$ then $s_0/s_1^\prime\big|_{\ovC}$ is regular function. More precisely it is global section of $\bL_0\otimes\bL_1^{-1}$ and $\overline{\lambda}$ is regular function. And since any regular function on projective curve is constant then $\overline{\lambda}\in k(u)$. Let's lift  it to an invertible function $\lambda\in k(U)$ and define $s_1$ be $s_1^\prime\cdot \lambda$.
Then $(\tau(s_0)\cdot s_1^{-1})\big|_{\ovC}\cdot \zeta^{-2}$ becomes equal to $\overline{\lambda}\cdot \lambda^{-1}\big|_{\ovC} = 1$ that means that $\tau(s_0) = s_1\cdot \zeta^2$  

\end{proof}
\begin{proof}[Proof of lemma \ref{lm_s0-s1-zeta-t0-unit}]
Firstly by lemma \ref{lm_s0s1zeta} we find line bundles
$\bL_0^\prime,\bL_1^\prime,\bL$
sections 
$s_0^\prime, s_1^\prime , \zeta$ and isomorphism $\tau$.

Next let $F$ be a finite set of closed points in closed fibre $\ovC$ for one in each irreducible component.
The Serre theorem on cohomology of powers of ample bundle
yield that for all integer $l$ large some $L$ restriction homomorphisms 
$$\Gamma(\ovbC,\bO(l))\to\Gamma(F\cup x,\bO(l))\simeq \bigoplus\limits_{y\in F\cup x} k(y), \;
\Gamma(\ovbC,\bL_0(l))\to \Gamma(F\cup x,\bL_0(l))\simeq \bigoplus\limits_{y\in F\cup x} k(y)$$
are surjective and hence there are sections 
$$\begin{aligned}
  unit\in \Gamma(\ovbC,\bO(l)), 
  triv\in \Gamma(\ovbC, \bL_0(l)\colon \\
  unit(x),triv(y)\neq 0, y\in F\cup x
\end{aligned}$$ 
Then zero locus of $unit$ and $triv$ does not intersects with $S_0 \cup S_1$ and they are finite over $U$ since they closed fibres has dimension zero. So we can put 
$$
\sbCinf:= Z(unit)\cup Z(triv)\cup\bCinf,\, \bO(1)^\prime = \bO(l),\, \sbC = \ovbC-\sbCinf
$$
and get
$$\begin{aligned}
unit\in \Gamma(\ovbC,\bO(1)^\prime),\, triv\in \Gamma(\ovbC, \bL_0(1)^\prime),\,
Z(unit),Z(t_0)\subset \bC^\prime
\end{aligned}$$

\end{proof}

\begin{proof}[Proof of lemma \ref{lm_s0s1zeta->es-ds}]
The claim immediately follows from three points, namely 
from that $\bCinf$ is henselian,
section $s_0\cdot s_1^{-1}\big|_{\Cinf}$ is square 
and by the Serre theorem   restriction homomorphisms 
\begin{equation}\label{eq_s-rest-hom}
\Gamma(\ovbC,\bO(l))\to \Gamma(S_0\cup \bCinf,\bO(l))
\Gamma(\ovbC,\bL(l))\to \Gamma(S_1\cup \bCinf,\bL(l))
\end{equation}
are surjective
for some large $l$.

In fact by conditions on sections $s_0,s_1,\zeta$ \eqref{eq_sectcond1} 
$s_0\cdot s_1^{-1}\big|_{\ovC} = \zeta^2$ and in particular $s_0\cdot s_1^{-1}\big|_{\Cinf} = \zeta^2\big|_{\Cinf}$. 
Since $\bCinf$ is finite over henselian scheme $U$ it is henselian.
So there is a section $$\overline{s}\in\Gamma(\bCinf,\bL)\colon \overline{s}^2 = s_0\cdot s_1^{-1} .$$
 
Now let's choose any decomposition of $\overline{s}$ into fractions of invertible sections
$$\overline{s} = {\overline{e}}\cdot{\overline{d}^{-1}}, \overline{e}\in\Gamma(\bCinf,\bO(l)), \overline{d}\in\Gamma(\bCinf,\bL(l)).$$
Then by surjectivity of homomorphisms \eqref{eq_s-rest-hom} there are sections
$$\begin{aligned}
e\in \Gamma(\ovbC,\bO(l)), e\big|_{\bCinf}=\overline{e}, e\big|_{S_0} = unit^l\big|_{S_0}\\
d\in \Gamma(\ovbC,\bL(l)), d\big|_{\bCinf} = \overline{d}, d\big|_{S_1} = \zeta\cdot unit^{l}\big|_{S_1} 
\end{aligned}$$
that implies conditions \eqref{eq-esds}.

The equality
$(\tau(s_0)\cdot d^{2})\big|_{\sbCinf}= (s_1\cdot e^{2})\big|_{\sbCinf}$
is the same to first equality of \eqref{eq-esds}
and to conclude 
$(\tau(s_0)\cdot d^{2})\big|_{2x}= (s_1\cdot e^{2})\big|_{2x}$
from the second it is enough to note that by \eqref{eq-sectcond1}
$\tau(s_0)\big|_{x}=s_1\cdot \zeta^2\big|_{x}$
 
\end{proof}

{\em Now we proceed to prove the theorem.}\label{proof_theoremRWCinner}
\par 
Let $\bO(1)$ be an ample bundle on relative projective curve $\ovbC$, 
then data $(\bC,\ovbC,\bCinf,\bO(1))$ from the formulation of theorem is compactified curve over $U$.

Firstly, let's
choose subcurve $(\sbC,\ovbC,\sbCinf,\bO(1))$ of $(\bC,\ovbC,\bCinf,\bO(1))$ (see definition \ref{def_subcomcurve} 
such that $\omega(\bC)$ is trivial. 
And choose some trivialisation $\mu \colon \omega(\sbC)\otimes\omega(U)^{-1}\simeq \bO(\sbC)$
(To do this it is enough to choose $\sbC$ as enough small open Zarisky neighbourhood in $\bC$ of the finite set of points in closed fibre, including the point $x$ and for one point in each irreducible component)

Next consequently applying lemmas 
  \ref{lm_s0-s1-zeta-t0-unit} and \ref{lm_s0s1zeta->es-ds} to $(\sbC,\ovbC,\sbCinf)$
we get subcurve $(\ssbC,\ovbC,\ssbCinf,\bO(1)^\prime)$ of the compactified curve  $(\sbC,\ovbC,\sbCinf,\bO(1))$ 
  with linear bundles $\bL_0$, $\bL_1$, $\bL$, integer $l$ and sections and isomorphisms
\begin{gather*}
\begin{aligned}
s_0\in \Gamma(\ovbC,\bL_0) &\colon Z(s_0)=S_0, &
 s_1\in \Gamma(\ovbC,\bL_1) &\colon  Z(s_1)=S_1,\\
\zeta\in \Gamma(\ovC,\bL\big|_{\ovC}), & \;\tau\colon \bL_0\simeq\bL_1\otimes\bL^2,
  &  \tau(s_0)\big|_{\ovC} = s_1\big|_{\ovC}\cdot \zeta^2, \\
triv\in \Gamma(\ovbC,\bL_0(1))&\colon Z(triv)\subset\ssbCinf, &
unit\in \Gamma(\ovbC,\bO(1)^\prime) &\colon Z(unit)\subset\ssbCinf,\\
e\in \Gamma(\ovbC,\bL_0(l)^\prime)&\colon Z(e)\subset \ssbC-x, &
d\in \Gamma(\ovbC,\bO(l)^\prime)&\colon Z(d)\subset \ssbC-x,
\end{aligned}\\
(\tau(s_0)\cdot d^2)\big|_{\ssbCinf\amalg 2x} = (s_1\cdot e^2)\big|_{\ssbCinf\coprod 2x}
\end{gather*} 
(where under $2x$ in the last row we mean nilpotent closed subscheme $Spec(k[\ssbC]/I^2(x))\subset \ssbC$).
And since $\ssbC\subset\sbC$ its canonical class is trivial, so let's choose some trivialisation $\mu\colon \omega(\ssbC)\simeq\bO(\ssbC)$. 

The statement of theorem for any subcurve  
implies the statement for original $(\bC,\ovbC,\bCinf,\bO(1))$, 
i.e. $[S_0]-[S_1]=0\in \overline{WCor_k}(U,\ssbC)\implies [S_0]-[S_1]=0\in \overline{WCor_k}(U,\bC)$, 
so let's redenote $(\bC,\ovbC,\bCinf,\bO(1))$ to $(\ssbC,\ovbC,\ssbCinf,\bO(1)^\prime)$.

Now let's consider regular map
\begin{equation*}
\overline{h} = 
  [\tau(s_0)\cdot d^2\cdot (1-\lambda) + s_1\cdot e^2 \cdot\lambda \colon \tau(triv)\cdot unit^{2l-1}]\colon 
  \ovbC\to \prl
\end{equation*}  
that is well defined because
 $(\tau(s_0)\cdot d^2)\big|_{\sbCinf}=(s_1\cdot e^2)\big|_{\bCinf}$ is invertible
and 
 $Z(triv\cdot unit^{2l-1})=Z(\tau(triv)\cdot unit^{2l-1})\subset \bCinf$
 (so vanish locus of denominator and enumerator don't intersect)
Applying to this map and trivialisation $\mu$ 
  the construction \ref{const_OrWLf-WCor}
we get quadratic space $\Pi = (k[Z],q)$ in $\mathcal P(Z_{U\times\affl})$ where $Z = h^{-1}(0)\subset \bC\times\affl$
defining morphism $\Phi \langle U,\ovbC,\bC,\mu,h,id_{\bC}\rangle\in WCor_k(U\times\affl, \bC)$.

We are going to show that after some twist by invertible function on $U$ it defines homotopy between $[S_0]$ ands $[S_1]$ in $WCor_k(U,\bC)$. 
Let $Pi_0=i_0^*(\Pi)$, and $\Pi_1=i_1^*(\Pi)$, where $i_0,i_1\colon U\to U\times \affl$ be zero and unit sections.

To do this let's note that
the fibre of $Z$ over $U\times 0$ is $Z(s_0\cdot d^2)$ and since 
$Z(s_0)=S_0$ and $S_0 \cap Z(d) = \emptyset$
it splits into $S_0 \coprod Z(d^2)$.
Hence quadratic space $\Pi_0$ splits into the direct sum $(k[S_0],q_0)\oplus (k[Z(d^2)],q^\prime_0)$.
By lemma \ref{lm_SqMet} the secons summand is metabolic and defines zero morphism in $WCor(U,\bC)$,
and the first summand is quadratic space of the rank one (over $k[U]$), so we get
$$\Phi_0\stackrel{def}{=}\Phi\circ i_0 = [S_0]\circ \langle q_0 \rangle, q_0\in k[U]^*$$.
(see definition \ref{def_twaut} for $\langle q_0\rangle$) 
And similarly  
$$\Phi_1\stackrel{def}{=}\Phi\circ i_1 = [S_1] \circ \langle q_1 \rangle, q_0\in k[U]^*$$.
So $h$ together with $\mu$ give us homotopy in $WCor$ connecting $[S_0]\circ \langle q_0\rangle$ and $[S_0]\circ \langle q_0\rangle$. 

Let's consider closed fibre of this homotopy, i.e. $\Pi_u=i_u^*(\Pi)= (k[Z\times_U u],q_u)$ where $i_u\colon u\times\affl\hookrightarrow U\times\affl$. 
Since $(\tau(s_0)\cdot d^2)\big|_{x} = (s_1\cdot e^2)\big|_{x} = 0$ and 
  $(\tau(s_0)\cdot d^2)\big|_{2x} = (s_1\cdot e^2)\big|_{2x} \neq 0$ , 
$\affl\times x$ is isolated component of $Z\times_U u$.
Hence $\Pi_x$ splits into sum $(k[x\times\affl],q_x)\oplus (k[Z\times_U u - x\times\affl], q_u^\prime)$,
and $q_x$ is invertible function on $\affl\times x$, hence it is constant in $k(u)*$.

Thus we get that fibre of $\Pi$ over $u\times 0$ splits into the sum of quadratic spaces with support $x\times 0$ and some other one (don't contain $x\times 0$) and on the one side 
the first summand is equal to 
$(k[x\times 0], q_0(u) )$ 
and on the other side -- to
$(k[x,\times 0],q_x)$. 
Hence $q_0(u) = q_x\cdot r_0^2$ where $r_0\in k(u)^*$.
And similarly $q_1(u) = q_x\cdot r_1^2$ where $r_1\in k(u)^*$.

So 
$q_1\cdot q_0^{-1}$ is square using lemma-remark \ref{lm_unitclp-symb=unit} we get that after multiplication of quadratic form $q$ of $\Pi$ by $q_0^{-1}$, i.e. composing of homotopy $\Phi$ with automotphism $\langle q_0\rangle \in WCor(U,U)$ we get homotopy between $[S_0]$ and $[S_1]$.

\end{proof}

\section{Appendix}
\begin{theorem}\label{cth_PicRigi}
For any relative smooth projective curve $\ovbC$ over henselian local scheme $U$ that is henselisation at closed point of a smooth variety over field $k$,
$$Pic(\ovbC)/n\hookrightarrow Pic(\ovC)/n,\forall n\in \mathbb Z, (n,char\,k)=1$$
where $\ovC$ denotes closed fibre of $\ovbC$.

\end{theorem}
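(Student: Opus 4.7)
The plan is to deduce this from the étale-cohomological Kummer sequence combined with the proper base change theorem for henselian local schemes, which is the standard route indicated by the reference to Suslin--Voevodsky. Throughout we work with the small étale sites of $\ovbC$ and $\ovC$, noting that the hypothesis $(n,\mathrm{char}\,k)=1$ ensures $\mu_n$ is a locally constant constructible sheaf on both.

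First I would write down the Kummer sequence $1\to \mu_n \to \mathbb G_m \xrightarrow{n} \mathbb G_m \to 1$ on $\ovbC$ and on $\ovC$ and take the associated long exact sequences. On each of them this yields a short exact sequence
\begin{equation*}
0 \to \mathrm{Pic}(\ovbC)/n \to H^2_{\mathrm{et}}(\ovbC,\mu_n) \to H^2_{\mathrm{et}}(\ovbC,\mathbb G_m)[n] \to 0,
\end{equation*}
and analogously for $\ovC$. In particular the map $\mathrm{Pic}(-)/n \hookrightarrow H^2_{\mathrm{et}}(-,\mu_n)$ is injective on both schemes, and by naturality of the Kummer sequence the restriction map to the closed fibre fits into a commutative square with the restriction on $H^2_{\mathrm{et}}(-,\mu_n)$.

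The next step is to invoke proper base change. Since $\ovbC\to U$ is projective (in particular proper), $U$ is henselian local with closed point $u$ and closed fibre $\ovC=\ovbC\times_U u$, and $\mu_n$ is a finite locally constant sheaf with torsion order invertible on $U$, the proper base change theorem gives an isomorphism $H^i_{\mathrm{et}}(\ovbC,\mu_n)\xrightarrow{\sim} H^i_{\mathrm{et}}(\ovC,\mu_n)$ for every $i$, in particular for $i=2$. Placing this in the square
\begin{equation*}
\xymatrix{
\mathrm{Pic}(\ovbC)/n \ar@{^(->}[r] \ar[d] & H^2_{\mathrm{et}}(\ovbC,\mu_n) \ar[d]^{\simeq}\\
\mathrm{Pic}(\ovC)/n \ar@{^(->}[r] & H^2_{\mathrm{et}}(\ovC,\mu_n)
}
\end{equation*}
the composition along the top and the right column is injective, hence the left vertical arrow is injective as well, which is the desired statement.

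The only non-routine step is the appeal to proper base change in this mildly relative context; the rest is a diagram chase. One should verify that the henselian hypothesis on $U$ is used correctly (so that cohomology of the total space agrees with cohomology of the fibre over the closed point), and that $(n,\mathrm{char}\,k)=1$ is exactly what is needed to ensure $\mu_n$ is a valid torsion coefficient sheaf for the theorem. No further input about the geometry of the curve is required beyond smoothness and properness, both of which are in the hypothesis.
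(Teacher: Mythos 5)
Your proposal is correct and follows essentially the same route as the paper: the Kummer sequence gives the injection $\mathrm{Pic}(-)/n\hookrightarrow H^2_{\mathrm{et}}(-,\mu_n)$ on both the total space and the closed fibre, proper base change over the henselian local base identifies (in the paper, merely injects) $H^2_{\mathrm{et}}(\ovbC,\mu_n)$ into $H^2_{\mathrm{et}}(\ovC,\mu_n)$, and the diagram chase concludes. No substantive difference.
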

And more precisely it can be reformulated as the following 
\begin{theorem}
In the situation of theorem \ref{cth_PicRigi} for any linear bundle $\bL$ on $\ovbC$ 
such that $\bL\big|_{\ovC}\simeq \bO(\ovC)$  there is line bundle bundle $\bL^\prime $ such that $\bL\simeq {\bL^\prime}^{\otimes n}$.
\end{theorem}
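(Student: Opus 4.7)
The plan is to apply the Kummer sequence in \'etale cohomology together with proper base change for the henselian local base $U$, following the standard argument of Suslin--Voevodsky. To begin with, apply the Kummer short exact sequence $1\to \mu_n\to \mathbb{G}_m\xrightarrow{n}\mathbb{G}_m\to 1$ on the \'etale sites of $\ovbC$ and $\ovC$; this is exact since $n$ is invertible on $U$ and hence on $\ovbC$. Together with $H^1_{et}(-,\mathbb{G}_m)=Pic(-)$ this yields a commutative square whose horizontal arrows are the Kummer connecting maps $\delta\colon Pic(\ovbC)\to H^2_{et}(\ovbC,\mu_n)$ and $\delta_0\colon Pic(\ovC)\to H^2_{et}(\ovC,\mu_n)$ and whose vertical arrows are restriction along $\ovC\hookrightarrow \ovbC$; by exactness of the corresponding long exact sequences, $\ker\delta=n\cdot Pic(\ovbC)$ and $\ker\delta_0=n\cdot Pic(\ovC)$.

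Next, I would show that the restriction $H^2_{et}(\ovbC,\mu_n)\to H^2_{et}(\ovC,\mu_n)$ is an isomorphism. Proper base change applied to $\pi\colon \ovbC\to U$ and to $\mu_n$ gives $(R^q\pi_*\mu_n)_{\bar u}\simeq H^q_{et}(\ovbC_{\bar u},\mu_n)$. Combined with the fact that for a henselian local scheme $U$ with closed point $u$ the pullback is an equivalence between the \'etale sites of $U$ and $u$ on torsion sheaves (so $H^p_{et}(U,\mathcal F)\simeq H^p_{et}(u,\mathcal F|_u)$), the Leray spectral sequence for $\pi$ matches term-by-term with the Hochschild--Serre spectral sequence of the Galois cover $\ovbC_{\bar u}\to \ovC$, giving the required isomorphism together with functoriality in the restriction map.

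To finish: since $\bL|_{\ovC}\simeq \bO_{\ovC}$, its class in $Pic(\ovC)$ vanishes, so $\delta_0([\bL|_{\ovC}])=0$; by commutativity of the square and the isomorphism just established, $\delta([\bL])=0$, hence $[\bL]\in n\cdot Pic(\ovbC)$ and any preimage $\bL'$ under multiplication by $n$ satisfies $\bL\simeq {\bL'}^{\otimes n}$. The main obstacle is the spectral-sequence bookkeeping of the middle step when the residue field $k(u)$ is not separably closed: one either passes to the strict henselisation $U^{sh}$ and descends along $\mathrm{Gal}(k(u)^{sep}/k(u))$, or matches the Leray and Hochschild--Serre sequences directly. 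The first and third steps are then routine formal diagram chases in the Kummer long exact sequence.
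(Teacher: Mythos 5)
Your proposal is correct and follows essentially the same route as the paper: the Kummer sequence identifies $Pic/n$ with a subgroup of $H^2_{et}(-,\mu_n)$, and proper base change over the henselian local base $U$ compares $H^2_{et}(\ovbC,\mu_n)$ with $H^2_{et}(\ovC,\mu_n)$, forcing the class of $\bL$ to be divisible by $n$. The only (harmless) difference is that you establish the restriction on $H^2$ as an isomorphism, whereas the paper only uses its injectivity, which suffices for the conclusion.
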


\begin{proof}
Since $Pic(X)\simeq H^1(X,\mu)$ for any variety $X$
exact sequence of groups $\mu_n\hookrightarrow \mu\xrightarrow{x\mapsto x^n}\mu$
leads to following commutative diagram with rows being exact sequences
\begin{gather*}
\xymatrix{
Pic(\ovbC)\ar[r]^{\dot n}\ar[d] &Pic(\ovbC)\ar[r]\ar[d] &H^2(\ovbC,\mu_n)\ar[d]\\
Pic(\ovC)\ar[r]^{\dot n}             &Pic(\ovC)\ar[r]            &H^2(\ovC,\mu_n)
}
\end{gather*}
Right vertical arrow $H^2(\ovbC,\mu_n)\to H^2(\ovC,\mu_n)$ is injective by proper base change theorem.
So we get diagram 
$$\xymatrix{
Pic(\ovbC)/n Pic(\ovbC) \ar@{^(->}[r]\ar[d] &H^2(\ovbC,\mu_n)\ar@{^(->}[d]\\
Pic(\ovC) / n  Pic(\ovC)\ar@{^(->}[r]            &H^2(\ovC,\mu_n)
}
$$
And since horizontal arrows 
and right vertical arrow  are injective,
hence left vertical arrow is injective too.

\end{proof}

\label{bib}

\Address


\begin{thebibliography}{99}

\bibitem{SusAlgClFiledKth}
A. Suslin, On the K-theory of algebraically closed fields, Invent. Math. 73 (2) (1983) pp. 241–245.

\bibitem{Gab_RigKthhenspairs} 
O. Gabber, K-theory of Henselian local rings and Henselian pairs, Contemporary Mathematics, Vol.
126, American Mathematical Society, Providence, RI, 1992, pp. 59–70.

\bibitem{GT_HensLocRingsKth}
H. Gillet, R. Thomason, The K-theory of strict Hensel local rings and a theorem of Suslin, J. Pure
Appl. Algebra 34 (2–3) (1984) 241–254.

\bibitem{Sus_LocFieldsKth}
A. Suslin, On the K-theory of local fields, J. of Pure Appl. Algebra 34, (2–3) (1984) 301-318.

\bibitem{SV_SingHomofSchemes}
A. Suslin, V. Voevodsky, Singular homology of abstract algebraic varieties, Invent. Math. 123 (1)
(1996) pp. 61–94.

\bibitem{PY_RigidityOrientFunctor}
I. Panin, S. Yagunov Rigidity for orientable functors. J. Pure Appl. Alge- bra, Vol. 172(1), (2002), 49-77.

\bibitem{YH_RidgsomeRepesTh}
J. Hornbostel, S. Yagunov Rigidity for Henselian local rings and A1-
representable theories. Math. Z. 255 (2007), no. 2, 437449.

\bibitem{Y_NonOrientableTheor}
S. Yagunov Rigidity. II. Non-orientable case. Doc. Math. 9 (2004), 29-40.

\bibitem{RO_RidgSHfieldex}
O. R\"ondings, P. \O stv\ae r Rigidity in motivic homotopy theory. Math. Ann. 341 (2008), no. 3, 651-675.

\bibitem{N_RigdOmegatr}
A. Neshitov, Rigidity theorem for presheaves with $\Omega$
-transfers, Algebra i Analiz, 26:6 (2014), 78–98 

\bibitem{B_MotRealEtHth}
Tom Bachmann
, Motivic and Real Etale Stable Homotopy Theory. (2016) arXiv:1608.08855

\bibitem{AD_WittCor} 
Andrei Druzhinin
,
Triangulated category of effective Witt-motives $ DWM_{eff}(k)$ \; (2016)  arXiv:1601.05383


\bibitem{Bal_HomInvWitt}
Paul Balmer. Witt cohomology, Mayer-Vietoris, homotopy invariance and the Gersten conjecture. K-Theory, 23(1):15–30, 2001.

\bibitem{Bal_DerWittgr}
Balmer P. Witt groups Handbook of K-theory, vol. 2, Springer, Berlin (2005), 539-576.

\bibitem{AK_GrothDual}
A. Altman, S. Kleiman, Introduction to Grothendieck Duality Theory, Lecture Notes in Mathematics 146, Springer (1970).

\bibitem{OP_Witt}
Ojanguren~M., Panin~I.,
A Purity theorem for the Witt Group.
Ann. Sci. Ecole Norm. Sup. 
(4) 32 (1999) 
no.1, 71-86.



\end{thebibliography}
\end{document}